\newtheorem{thm}{Theorem}[section]
\newtheorem{lem}[thm]{Lemma}
\newtheorem{prop}[thm]{Proposition}
\newtheorem{cor}[thm]{Corollary}
\newtheorem{defn}[thm]{Definition}
\begin{document}

\title[$C(\mathcal{X^{*}})$-Cover and $C(\mathcal{X^{*}})$-Envelope ]
{$C(\mathcal{X^{*}})$-Cover and $C(\mathcal{X^{*}})$-Envelope }

\author{Tah\.{i}re \" Ozen}
\author{Em\.{i}ne Y{\i}ld{\i}r{\i}m}
\address{Department of Mathematics, Abant \.{I}zzet Baysal University
\newline
G\"olk\"oy Kamp\"us\"u Bolu, Turkey} \email{ozen\_t@ibu.edu.tr}
\email{emineyyildirim@gmail.com}

\date{\today}
\subjclass{18G35} \keywords{Complex, Cover, Envelope, Inverse and
Direct limit.} \pagenumbering{arabic}

\begin{abstract} Let $R$ be any associative ring with unity and
$\mathcal{X}$ be a class of $R$-modules of closed under direct sum (and summands) and with extension closed. We prove that every complex has an $C(\mathcal{X^{*}})$-cover
($C(\mathcal{X^{*}})$-envelope) if every module has an
$\mathcal{X}$-cover ($\mathcal{X}$-envelope) where
$C(\mathcal{X^{*}})$ is the class of complexes of modules in
$\mathcal{X}$ such that it is closed under direct and inverse limit. 
\end{abstract}

\maketitle

\section{Introduction}
\begin{defn}($\mathcal{X^{*}}-complex$ ) Let $\mathcal{X}$ be a class of R-modules. A
complex $\mathcal{C}: \ldots \longrightarrow C^{n-1} \longrightarrow
C^{n} \longrightarrow C^{n+1} \longrightarrow \ldots$ is called an
$\mathcal{X^{*}}-(cochain)$ complex if  $C^i \in \mathcal{X}$ for
all $i \in \mathbb{Z}$. A complex $\mathcal{C}: \ldots
\longrightarrow C_{n+1} \longrightarrow C_{n} \longrightarrow
C_{n-1} \longrightarrow \ldots$ is called an
$\mathcal{X^{*}}-(chain)$ complex if $C_i \in \mathcal{X}$ for all
$i\in \mathbb{Z}$. The class of all $\mathcal{X^{*}}-complexes$ is
denoted by $C(\mathcal{X^{*}})$.
\end{defn}

\begin{defn}Let $\mathcal{A}$ be an abelian category and let $\mathcal{Y}$ be
a class of objects of $\mathcal{A}$. Then for an object \;$M \in
\mathcal{A}$\;a morphism \;$\phi: M \longrightarrow X$\; where $X
\in \mathcal{Y}$ is called an \;$\mathcal{Y}$-preenvelope of $M$, if
\;$f:M \longrightarrow  X'$\; where \;$X' \in \mathcal{Y}$\; and the
following diagram

\[\begin{diagram}
\node{M} \arrow{e,t}{f} \arrow{s,t}{\phi} \node{X'}\\
\node{X} \arrow{ne,r,..}{g}
\end{diagram}\] \\
\noindent can be completed to a commutative diagram.

\noindent If when $X=X'$ and the only such $g$ is automorphism of
$X$, then \;$\phi: M \longrightarrow X$\; is called an
\;$\mathcal{Y}$-envelope of $M$. Dually it can be given an
\;$\mathcal{Y}$-precover (cover)of $M$.
\end{defn}

\begin{defn}($\#$-injective complex ) (\cite{AF})A complex of
injective modules is called $\#$-injective complex.
\end{defn}

\noindent In \cite{AA} and \cite{A},respectively, it is proved that
if $R$ is a noetherian ring, then every complex of  $R$-modules has
a $\#$-injective cover and if $R$ is a Gorenstein ring, then every
bounded complex has a Gorenstein projective precover. Using the
proof of Lemma 7 in \cite{A}, we give a generalization of \cite{AA}
and \cite{A}. (See \cite{R} and \cite{EJ} for the other definitions
and concepts.) Throughout the paper $\mathcal{X}$ denotes a class of
$R$-modules.

\section{General Results}

\begin{lem} Let for all $n \in \mathbb{Z}$, $0 \longrightarrow P_{n} \longrightarrow
X_{n}$ is an $\mathcal{X}-precover$ of $X_{n}$. Then we have a monic
chain map \;$0 \longrightarrow P \longrightarrow X$\; where $P:...\rightarrow
P_{2}\rightarrow P_{1}\rightarrow P_{0}\rightarrow ...$ and $X:...\rightarrow
X_{2}\rightarrow X_{1}\rightarrow X_{0}\rightarrow ...$ .
\end{lem}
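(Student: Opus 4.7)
\medskip

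\noindent\textbf{Proof proposal.} The plan is to construct the differentials of $P$ one degree at a time using the $\mathcal{X}$-precover property, and then to verify $\partial^{2}=0$ by exploiting the monicity of each given precover $P_{k}\to X_{k}$. No induction is required because the lift at each degree is independent of the others.

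For each $n\in\mathbb{Z}$, I would consider the composite
\[
P_{n}\hookrightarrow X_{n}\xrightarrow{\partial_{n}^{X}} X_{n-1}.
\]
Since $P_{n}\in\mathcal{X}$ and the arrow $P_{n-1}\to X_{n-1}$ is an $\mathcal{X}$-precover of $X_{n-1}$, the defining factorization property yields a morphism $\partial_{n}^{P}:P_{n}\to P_{n-1}$ making the square
\[
\begin{CD}
P_{n} @>{\partial_{n}^{P}}>> P_{n-1}\\
@VVV @VVV\\
X_{n} @>{\partial_{n}^{X}}>> X_{n-1}
\end{CD}
\]
commute. The collection $\{\partial_{n}^{P}\}_{n}$ is the candidate differential on $P$, and the vertical arrows form a degreewise monic candidate chain map $P\to X$.

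Next I would check that $P$ is actually a complex, i.e.\ $\partial_{n-1}^{P}\circ\partial_{n}^{P}=0$ for each $n$. Post-composing this morphism with the monomorphism $P_{n-2}\hookrightarrow X_{n-2}$ and pasting the two commuting squares rewrites it as $\partial_{n-1}^{X}\circ\partial_{n}^{X}\circ(P_{n}\hookrightarrow X_{n})$, which vanishes because $X$ is a complex. Since $P_{n-2}\to X_{n-2}$ is monic it can be cancelled, yielding $\partial_{n-1}^{P}\circ\partial_{n}^{P}=0$. This shows $P$ is a complex and hence $P\to X$ is a chain map, monic in every degree by hypothesis.

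The only mild obstacle is that the precover factorization is not unique, so different choices of lift at each degree produce different complexes $P$; what rescues the argument is precisely the monicity built into the hypothesis ``$0\to P_{n}\to X_{n}$'', which is what allows $(\partial^{X})^{2}=0$ to propagate to $(\partial^{P})^{2}=0$. No closure assumption on $\mathcal{X}$ under sums, summands, extensions, or limits is needed for this step.
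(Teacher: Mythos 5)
Your proof is correct: lifting $\partial_n^X\circ(P_n\hookrightarrow X_n)$ through the precover $P_{n-1}\to X_{n-1}$ gives the differentials, and the degreewise monicity of $P_{n-2}\to X_{n-2}$ lets $(\partial^X)^2=0$ descend to $(\partial^P)^2=0$. The paper dismisses this lemma with ``It is trivial,'' and your argument is exactly the standard one it is implicitly relying on, so there is nothing to compare beyond noting that you have supplied the details the authors omitted.
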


\begin{proof} It is trivial.
\end{proof}

\begin{thm} Let every module have a monic $\mathcal{X}-precover$(or an epic $\mathcal{X}-preenvelope$)
then every complex $X$ has a monic $C(\mathcal{X^{*}})-precover$(or
epic $C(\mathcal{X^{*}})-preenvelope$). Moreover if every module has
a monic $\mathcal{X}-cover$(or epic $\mathcal{X}-envelope$), then
every complex $X$ has a monic $C(\mathcal{X^{*}})-cover$ (or epic
$C(\mathcal{X^{*}})-envelope$).
\end{thm}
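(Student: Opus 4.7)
The plan is to build the desired $C(\mathcal{X^{*}})$-(pre)cover of a complex $X$ componentwise from the hypothesized module-level (pre)covers, then promote each required property from modules to complexes by exploiting the monicness (dually, epicness) of the components. For concreteness I describe the precover/cover case; the preenvelope/envelope case will be formally dual.

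First, apply the hypothesis to each module $X_{n}$ to obtain a monic $\mathcal{X}$-precover $f_{n}\colon P_{n}\to X_{n}$. Lemma 2.1 then supplies a monic chain map $f\colon P\to X$: one constructs $d^{P}_{n}\colon P_{n}\to P_{n-1}$ by lifting the composite $d^{X}_{n}\circ f_{n}$ through the precover $f_{n-1}$, and $(d^{P})^{2}=0$ follows because post-composing with the monomorphism $f_{n-2}$ yields $(d^{X})^{2}\circ f_{n}=0$. Since each $P_{n}\in\mathcal{X}$, we have $P\in C(\mathcal{X^{*}})$.

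To verify that $f$ is a $C(\mathcal{X^{*}})$-precover, I would take an arbitrary chain map $g\colon Q\to X$ with $Q\in C(\mathcal{X^{*}})$. The precover property of each $f_{n}$ produces a factorization $h_{n}\colon Q_{n}\to P_{n}$ with $f_{n}\circ h_{n}=g_{n}$. The essential observation is that $h=(h_{n})$ is automatically a chain map: after post-composing $d^{P}_{n}\circ h_{n}$ and $h_{n-1}\circ d^{Q}_{n}$ with the monic $f_{n-1}$, both give $g_{n-1}\circ d^{Q}_{n}$, so monicness forces equality. If each $f_{n}$ is moreover a cover, then any chain endomorphism $\phi\colon P\to P$ with $f\circ\phi=f$ has each $\phi_{n}$ an automorphism of $P_{n}$; a direct computation from the chain-map relation $d^{P}_{n}\circ\phi_{n}=\phi_{n-1}\circ d^{P}_{n}$ shows that $(\phi_{n}^{-1})$ is again a chain map, so $\phi$ is invertible in $C(\mathcal{X^{*}})$ and $f$ is a $C(\mathcal{X^{*}})$-cover.

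The envelope half proceeds symmetrically: starting from epic preenvelopes $X_{n}\to E_{n}$, lift $(X_{n-1}\to E_{n-1})\circ d^{X}_{n}$ through $X_{n}\to E_{n}$ to define the differential on $E$, and replace every use of monicness by epicness. The main—and essentially the only—obstacle throughout is checking that componentwise lifts of chain maps assemble into chain maps; monicness (respectively epicness) of the hypothesized (pre)covers is precisely what makes this verification automatic, sidestepping any appeal to the direct or inverse limit closure of $C(\mathcal{X^{*}})$ mentioned in the abstract.
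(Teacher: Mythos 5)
Your proposal is correct and follows essentially the same route as the paper: construct the (pre)cover componentwise, lift the differential and all comparison maps through the module-level precovers, and use monicness of each $f_{n}$ to force the componentwise lifts to commute with the differentials (with the cover case reduced to each $\theta_{n}$ being an automorphism, and the envelope case dual via epicness). The only difference is cosmetic — you spell out the construction of $d^{P}$ and the invertibility of the comparison endomorphism, which the paper compresses into the ``trivial'' Lemma 2.1 and a one-line remark.
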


\begin{proof} Let X be any complex and $0\rightarrow P_{n}\rightarrow X_{n}$ be an $\mathcal{X}$-precover of $X_{n}$.
By Lemma 2.1 there exists a complex $P:...\rightarrow
P_{2}\rightarrow P_{1}\rightarrow P_{0}\rightarrow ...$ such that
$0\rightarrow P \rightarrow X$ is a chain map. Let
$P':...\rightarrow P'_{2}\rightarrow P'_{1}\rightarrow
P'_{0}\rightarrow ...$ be any $\mathcal{X^{*}}$-complex and $f':P'
\longrightarrow X$ be a chain map. Then we have the following
diagram as follow:
\[\begin{diagram}
\node[2]{P_{n+1}} \arrow{e,t} {\lambda_{n+1}} \arrow{sw,t} {f_{n+1}}
\node{P_{n}} \arrow{e,t} {\lambda_{n}} \arrow{sw,t} {f_{n}}
\node{P_{n-1}} \arrow{sw,t}{f_{n-1}} \\
\node{X_{n+1}} \arrow{e,t}{\beta_{n+1}} \node{X_{n}}
\arrow{e,t}{\beta_{n}}
\node{X_{n-1}} \\
\node[2]{P'_{n+1}} \arrow{e,r} {\lambda'_{n+1}} \arrow{nw,r}
{f'_{n+1}} \node{P'_{n}} \arrow{e,r} {\lambda'_{n}} \arrow{nw,r}
{f'_{n}} \node{P'_{n-1}} \arrow{nw,r} {f'_{n-1}} \\
\end{diagram}\]
\noindent where since $P'$ is an $\mathcal{X^{*}}-complex$ and $f':P'
\longrightarrow X$ be a chain map, we can find $\theta_{n}:P'_{n}
\longrightarrow P_{n}$ for all $n \in \mathbb{Z}$ such that
${f_{n}}{\theta_{n}}={f'_{n}}$.

\noindent Since $f'$ is a chain map,
${f'_{n}}{\lambda'_{n+1}}={\beta_{n+1}}{f'_{n+1}}$, for all $n \in
\mathbb{Z}$. Now we will show that $\theta$ is a chain map. Since
$f_{n}$ is a chain map,

${f_{n}}{\lambda_{n+1}}={\beta_{n+1}}{f_{n+1}}$,
${f_{n}}{\lambda_{n+1}}{\theta_{n+1}}={\beta_{n+1}}{f_{n+1}}{\theta_{n+1}}={\beta_{n+1}}{f'_{n+1}}
={f'_{n+1}}{\lambda'_{n+1}}={f_{n}}{\theta_{n}}{\lambda'_{n+1}}$

\noindent Since $f_{n}$ is $1-1$,
${\lambda_{n+1}}{\theta_{n+1}}={\theta_{n}}{\lambda'_{n+1}}$. So
$\theta$ is a chain map. Let $0 \longrightarrow P \longrightarrow X$ be a monic chain map
such that $0 \longrightarrow P_{n}\longrightarrow X_{n}$ is an
$\mathcal{X}-cover$ of $X_{n}$. Then $0 \longrightarrow P
\longrightarrow X$ is an $C(\mathcal{X^{*}})-cover$ of $X$. Because
we have the following diagram:
\[\begin{diagram}
\node[2]{P_{n+1}} \arrow{e,t} {\lambda_{n+1}} \arrow{sw,t} {f_{n+1}}
\node{P_{n}} \arrow{e,t} {\lambda_{n}} \arrow{sw,t} {f_{n}}
\node{P_{n-1}} \arrow{sw,t}{f_{n-1}} \\
\node{X_{n+1}} \arrow{e,t}{\beta_{n+1}} \node{X_{n}}
\arrow{e,t}{\beta_{n}}
\node{X_{n-1}} \\
\node[2]{P_{n+1}} \arrow{e,r} {\lambda_{n+1}} \arrow{nw,r}
{f_{n+1}} \node{P_{n}} \arrow{e,r} {\lambda_{n}} \arrow{nw,r}
{f_{n}} \node{P_{n-1}} \arrow{nw,r} {f_{n-1}} \\
\end{diagram}\]
\noindent We can find that for all $n\in \mathbb{Z}$, $\theta_{n}$ is an
automorphism. By the proof above $\theta:P \rightarrow P$ is a chain
map such that ${f}{\theta}={f}$. Since for all $n\in \mathbb{Z}$,
$\theta_{n}$ is a automorphism, $\theta$ is an automorphism with
${f}{\theta}={f}$. So $0 \longrightarrow P \longrightarrow X$ is a
cover of $X$. Dually we can find a
$C(\mathcal{X^{*}})-preenvelope(envelope)$; $X \longrightarrow E
\longrightarrow 0$.

\end{proof}

\begin{lem} \label{1.28} Let $\mathcal{X}$ be closed under finite sum. If every module has an $\mathcal{X}$-precover, then
every bounded complex has an $C(\mathcal{X^{*}})$-precover.
\end{lem}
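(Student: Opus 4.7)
The plan is to adapt the construction of Theorem~2.2 to the case when $\mathcal{X}$-precovers are no longer assumed monic. For each degree $i$ in the (finite) support of the bounded complex $X$, let $q_i\colon Q_i\to X_i$ be an $\mathcal{X}$-precover. Two obstructions arise that did not appear in the proof of Theorem~2.2, both stemming from the non-injectivity of the $q_i$: first, a lift $\mu_i\colon Q_i\to Q_{i-1}$ of $\beta_i q_i$ (produced by applying the precover property of $q_{i-1}$ to the map $\beta_i q_i\colon Q_i\to X_{i-1}$) only satisfies $q_{i-2}\mu_{i-1}\mu_i=0$, not $\mu_{i-1}\mu_i=0$, so the naive choice $P_i=Q_i$ does not form a chain complex; second, degree-wise lifts of a chain map $g\colon P'\to X$ need not assemble into a chain map, since the obstructions to commuting with the differentials land in the kernels $\ker q_{i-1}$.

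The remedy, which crucially uses the closure of $\mathcal{X}$ under finite sums, is to enlarge each $P_i$ by direct-summing finitely many copies of the neighbouring precovers $Q_j$; boundedness of $X$ ensures only finitely many summands are needed at each level, and the finite-sum hypothesis keeps each enlarged $P_i$ inside $\mathcal{X}$. A first candidate is $P_i=Q_i\oplus Q_{i+1}$ with $\phi_i(a,b)=q_i(a)+\beta_{i+1}q_{i+1}(b)$ and differential $d_i(a,b)=(0,a)$, so that $d^2=0$ and $\phi d=\beta\phi$ hold automatically using $\beta^2=0$. If at some step this single neighbouring summand is not enough to absorb the obstruction (so that the equation determining the ``correction'' component does not have a solution), one includes further $Q_j$ summands from additional degrees, still remaining a finite sum because $X$ is bounded.

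Given any chain map $g\colon P'\to X$ from $P'\in C(\mathcal{X}^{*})$, the lift $\theta\colon P'\to P$ is then constructed by descending induction on the degree, starting from the top of the support of $X$. At level $i$, write $\theta_i=(\alpha_i,\gamma_i,\dots)$; the ``main'' component $\alpha_i\colon P'_i\to Q_i$ is produced by applying the precover property of $q_i$ to the residual $g_i-\beta_{i+1}q_{i+1}\gamma_i-\cdots$, while the ``correction'' components $\gamma_i\colon P'_i\to Q_{i+1}$ are chosen via further applications of the precover property at the neighbouring $Q_j$'s so that the chain-map equations $d_i\theta_i=\theta_{i-1}\lambda^{P'}_i$ hold.

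The main obstacle will be showing that this cascade of inductive choices can be carried out consistently across all degrees: the chain-map equations couple $\alpha_i$ with $\gamma_{i-1}$ and force certain compositions to land in $\ker q_{i-1}$, which the extra summands are precisely designed to absorb. The boundedness of $X$ is essential, since it provides a definite starting point for the descending induction and ensures the cascade of corrections terminates after finitely many steps; without it one would have to close $\mathcal{X}$ under more general colimits to accommodate an infinite tower of corrections.
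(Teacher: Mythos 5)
There is a genuine gap, and it sits exactly where you defer it: the ``main obstacle'' you postpone to the end is the entire content of the lemma, and the construction you set up beforehand cannot absorb it. Your first candidate $P_i=Q_i\oplus Q_{i+1}$ with $d_i(a,b)=(0,a)$ already fails a simple test: the chain-map equations force $\alpha_i=\gamma_{i-1}\lambda^{P'}_i$, so if $P'$ is a stalk complex concentrated in one degree (where $\lambda^{P'}=0$), the lifting condition collapses to $\beta_{i+1}q_{i+1}\gamma_i=g_i$, which has no solution for a general $g_i$ landing in $\ker\beta_i$. More importantly, the fix is not to adjoin ``further $Q_j$ summands from additional degrees'': no finite (or infinite) sum of the precovers $Q_j$ of the modules $X_j$ themselves will do. What is needed is, for each $X_j$, a full $\mathcal{X}$-precover \emph{resolution} $\cdots\to E_j^2\to E_j^1\to E_j^0\to X_j\to 0$, obtained by iteratively taking precovers of the successive kernels; the paper's precover of the bounded complex has degree-$k$ term a finite direct sum of syzygy-level terms $E_j^{k-j}$, and in particular is \emph{unbounded} in the increasing direction even though $X$ is bounded. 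Your claim that boundedness makes ``the cascade of corrections terminate after finitely many steps'' is therefore wrong in spirit: the corrections never terminate; boundedness only guarantees that each individual degree receives finitely many summands (which is where closure under finite sums enters) and that the induction on the length of $X$ has finitely many stages.

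Even granting the correct underlying object, your proposal contains no mechanism for proving that the degree-by-degree choices of $\theta$ can be made compatibly. The paper does this by (i) building the precover of $X(n+1)$ as the mapping cone of a comparison map $\nu$ from the precover resolution of the stalk $\underline{X_{n+1}}$ to the inductively given precover $D(n)$ of $X(n)$, and (ii) establishing the key lemma that the kernels $L^k=\ker(D(k)\to X(k))$ satisfy: $\operatorname{Hom}(S,L^k)$ is exact for every $\mathcal{X}^{*}$-complex $S$. This is proved by induction using the split short exact sequence $0\to L^{n}\to L^{n+1}\to L^{n+1}/L^{n}\to 0$ coming from the cone. The obstructions to solving your coupled equations for $\alpha_i,\gamma_i$ live precisely in the homology of $\operatorname{Hom}(A_j,L^k)$, and the exactness of these complexes is what lets one repeatedly rewrite an obstruction as an image under the next differential (the maps $s$, $s_1$, $r$, $r_1,\dots$ in the paper's proof). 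Without this lemma --- which in turn forces the use of precover resolutions rather than single precovers --- the ``cascade of inductive choices'' cannot be carried out, so the proposal as written does not prove the statement.
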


\begin{proof} Let $... \longrightarrow E_{1}' \overset{f_{1}} \longrightarrow E_{0}' \overset{\phi_{0}} \longrightarrow
X_{0} \longrightarrow 0$ be a minimal $\mathcal{X}$-precover
resolvent of $X_{0}$. Let A be an $\mathcal{X^{*}}$-complex and
$\beta \in Hom(A,X(0))$ where \;$X(0):... \longrightarrow 0
\longrightarrow X_{0} \longrightarrow 0 \longrightarrow 0
\longrightarrow ...$.
\noindent We have the diagram as follow:
\[\begin{diagram}
\node{...} \arrow{e} \node{A_{1}} \arrow{e,t} {a_{1}} \arrow{s,r}
{\lambda_{1}} \node{A_{0}} \arrow{e} \arrow{s,r} {\lambda_{0}}
\node{A_{-1}} \arrow{e}
\arrow{s} \node{...} \\
\node{...} \arrow{e} \node{E_{1}'} \arrow{e,t} {f_{1}} \node{E_{0}'}
\arrow{e} \arrow{s,r} {\phi_{0}} \node{0} \\
\node[2] {0} \arrow{e} \node{X_{0}}
\arrow{e} \node{0} \\
\end{diagram}\]
\noindent where ${\phi_{0}}{\lambda_{0}}={\beta_{0}}$ and
${\lambda_{0}}{a_{1}} \in Hom(A_{1},Ker\phi_{0})$. Then there exists
$\lambda_{1} \in Hom(A_{1},E_{1}')$, since $E_{1}' \longrightarrow
Ker\phi_{0}$ is a precover and similarly we can define $\lambda_{n}
\in Hom(A_{n},E_{n}')$ such that
$f_{n}\lambda_{n}=\lambda_{n-1}a_{n}$. Thus, $E':... \longrightarrow
E_{2}' \longrightarrow E_{1}' \longrightarrow E_{0}' \longrightarrow
0 \longrightarrow 0 \longrightarrow...$ is a
$C(\mathcal{X^{*}})$-precover of $X(0)$.
\noindent Let $X(n):... 0 \longrightarrow X_{n} \overset{\ell_{n}}
\longrightarrow ... \longrightarrow X_{1} \overset{\ell_{1}}
\longrightarrow X_{0} \longrightarrow 0 \longrightarrow 0
\longrightarrow ...$. We will use an induction on n to prove that
$X(n)$ has a $C(\mathcal{X^{*}})$-precover. We proved that $X(0)$
has a $C(\mathcal{X^{*}})$-precover above. Suppose that $D(n):...
\longrightarrow D_{n+1} \overset{d_{n+1}} \longrightarrow D_{n}
\longrightarrow .... \longrightarrow D_{1} \overset{d_{1}}
\longrightarrow D_{0} \longrightarrow 0$ be a
$C(\mathcal{X^{*}})$-precover of $X(n)$ such that
\[\begin{diagram}
\node{D_{n+1}} \arrow{e,t} {d_{n+1}}
\node{D_{n}} \arrow{e,t} {d_{n}} \arrow{s,r}
{\phi_{n}} \node{...} \arrow{e} \node{D_{1}} \arrow{e} \arrow{s,r} {\phi_{1}} \node{D_{0}} \arrow{e} \arrow{s,r} {\phi_{0}} \node{0}\\
\node{0} \arrow{e} \node{X_{n}}
\arrow{e,t} {\ell_{n}}
\node{...} \arrow{e} \node{X_{1}} \arrow{e,t} {\ell_{n}} \node{X_{0}} \arrow{e} \node{0}\\
\end{diagram}\]
\noindent Let $E_{\bullet}=... \rightarrow E_{2} \overset{t_{2}} \rightarrow
E_{1} \overset{t_{1}} \rightarrow E_{0} \rightarrow 0 \rightarrow 0
\rightarrow ...$ is a precover of $\underline{X_{n+1}}$ where
$t_{0}:E_{0} \rightarrow X_{n+1}$ is a precover of $X_{n+1}$. Then
we have the following commutative diagram:
\[\begin{diagram}
\node{E_{\bullet}} \arrow{e,t} {\nu} \arrow{s} \node{D(n)} \arrow{s,r} {\phi} \\
\node{\underline{X_{n+1}}} \arrow{e} \node{X(n)}\\
\end{diagram}\]
\noindent where $\ell_{n+1}: X_{n+1} \longrightarrow X_{n}$. Since
$\phi:D(n) \longrightarrow X(n)$ is a $C(\mathcal{X^{*}})$-precover
of X(n) and $E_{\bullet} \in C(\mathcal{X^{*}})$, we have a morphism
$\nu: E_{\bullet} \longrightarrow D(n)$ where
\[\begin{diagram}
\node{E_{\bullet}=...} \arrow{e} \node{E_{2}} \arrow{e,t} {t_{2}}
\arrow{s,r} {\nu_{2}} \node{E_{1}} \arrow{e,t} {t_{1}} \arrow{s,r}
{\nu_{1}} \node{E_{0}} \arrow{e} \arrow{s,r} {\nu_{0}}  \node{...} \\
\node{D(n)=...} \arrow{e} \node{D_{n+2}} \arrow{e,t} {d_{n+2}}
\node{D_{n+1}} \arrow{e,t}
{d_{n+1}} \node{D_{n}} \arrow{e,t} {d_{n}} \node{...} \\
\end{diagram}\]...(1)

\noindent where ${\phi_{n}}{\nu_{0}}={\ell_{n+1}}{t_{0}}$...(*).

\noindent Let C be the mapping cone of $\nu:E_{\bullet} \longrightarrow D(n)$.
So we have that \;$C=... \longrightarrow {D_{n+3} \oplus E_{2}}
\overset{\lambda_{n+3}} \longrightarrow {D_{n+2} \oplus E_{1}}
\overset{\lambda_{n+2}} \longrightarrow {D_{n+1} \oplus E_{0}}
\overset{\lambda_{n+1}} \longrightarrow D(n) \overset{d_{n}}
\longrightarrow D_{n-1} \longrightarrow ... \longrightarrow D_{1}
\longrightarrow 0$ where
$$\lambda_{n+1}(x,y)=d_{n+1}(x)+\nu_{0}(y)$$ and for $k \geq 2$
$$\lambda_{n+k}(x,y)=(d_{n+k}(x)+\nu_{k-1}(y),-t_{k-1}(y)).$$
\noindent We show that $C \longrightarrow X(n+1)$ is an
$C(\mathcal{X^{*}})$-precover of $X(n+1)$. Let $A \in
C(\mathcal{X^{*}})$ where
\[\begin{diagram}
\node{A=...} \arrow{e} \node{A_{n+2}} \arrow{e,t} {a_{n+2}}
\arrow{s} \node{A_{n+1}} \arrow{e,t} {a_{n+1}} \arrow{s,r}
{\beta_{n+1}} \node{A_{n}} \arrow{e,t} {a_{n}} \arrow{s,r}
{\beta_{n}}
\node{...} \\
\node{X(n+1)=...} \arrow{e} \node{0} \arrow{e} \node{X_{n+1}}
\arrow{e,t}
{\ell_{n+1}} \node{X_{n}} \arrow{e,t} {\ell_{n}} \node{...} \\
\end{diagram}\]
Then we have the following diagram:
\[\begin{diagram}
\node{...A_{n+2}} \arrow{e,t} {a_{n+2}}
\arrow{s,r} {\theta_{n+2}} \node{A_{n+1}} \arrow{e,t} {a_{n+1}}
\arrow{s,r} {\theta_{n+1}} \node{A_{n}} \arrow{e,t} {a_{n}}
\arrow{s,r} {\theta_{n}}
\node{...} \\
\node{...D_{n+2} \oplus E_{1}} \arrow{e,t}
{\lambda_{n+2}} \node{D_{n+1} \oplus E_{0}} \arrow{e,t}
{\lambda_{n+1}} \arrow{s,r} {(0,t_{0})} \node{D_{n}} \arrow{e,t}
{d_{n}} \arrow{s,r} {\phi_{n}} \node{...} \\
\node{...0} \arrow{e} \node{X_{n+1}} \arrow{e,t}{\ell_{n+1}} \node{X_{n}} \arrow{e,t}
{\ell_{n}} \node{...} \\
\end{diagram}\]
\noindent Firstly, we will find $\theta_{n+1}$ such that
${\lambda_{n+1}}{\theta_{n+1}}={\theta_{n}}{a_{n+1}}$ and
${(0,t_{0})}{\theta_{n+1}}={\beta_{n+1}}$. We know that
${\phi_{n}}{\lambda_{n+1}}={\ell_{n+1}}(0,t_{0})$ by (*). Since
$\beta_{n+1}:A_{n+1} \longrightarrow X_{n+1}$ and $A_{n+1} \in
\mathcal{X}$ and $t_{0}:E_{0} \longrightarrow X_{n+1}$ is an
$\mathcal{X}$-precover of $X_{n+1}$, there exists $r: A_{n+1}
\longrightarrow E_{0}$ such that $\beta_{n+1}={t_{0}}{r}$.

\noindent Claim: Let \;$L^{k}=Ker(D(k)\rightarrow X(k))$\; where $k \in
\{0,1,...,n\}$ and then $$L^{0}:... \longrightarrow E_{2}'
\overset{f_{2}} \longrightarrow E_{1}' \overset{f_{1}}
\longrightarrow Ker{\phi_{0}} \longrightarrow 0\longrightarrow...,$$
$L^{n}:... \longrightarrow D_{n+2} \overset{d_{n+2}}
\longrightarrow D_{n+1} \overset{d_{n+1}} \longrightarrow
Ker\phi_{n} \overset{d_{n}} \longrightarrow Ker\phi_{n-1}
\longrightarrow ... \longrightarrow Ker\phi_{1} \longrightarrow
Ker\phi_{0} \longrightarrow 0\longrightarrow...$ and also
$L^{n+1}:... \longrightarrow {D_{n+2} \oplus E_{1}}
\overset{\lambda_{n+2}} \longrightarrow {D_{n+1} \oplus Kert_{0}}
\overset{\lambda_{n+1}} \longrightarrow Ker\phi_{n} \longrightarrow
Ker\phi_{n-1} \longrightarrow ... \longrightarrow Ker\phi_{0}
\longrightarrow 0\longrightarrow...$ where $E_{\bullet}=...
\longrightarrow E_{2} \overset{t_{2}} \longrightarrow E_{1}
\overset{t_{1}} \longrightarrow E_{0} \longrightarrow 0$ (where
$E_{0}$ is in the nth place) is an $C(\mathcal{X^{*}})$-precover of
$\underline{X_{n+1}}$. We claim that for every $S \in
C(\mathcal{X^{*}})$, $Hom(S,L^{k})$ is exact where $k \in
\{0,1,...,n,n+1\}$.

\noindent Since $L^{0}:... \longrightarrow E_{2}' \overset{f_{2}}
\longrightarrow E_{1}' \overset{f_{1}} \longrightarrow Ker{\phi_{0}}
\longrightarrow 0$, for every $\mathcal{X^{*}}$-complex S,
$... \longrightarrow Hom(S,E_{2}') \longrightarrow Hom(S,E_{1}')
\longrightarrow Hom(S,Ker\phi_{0}) \longrightarrow 0$ is exact. So,
$Hom(S,L^{0})$ is exact. It is enough to prove that if
$Hom(S,L^{n})$ is exact, then \;$Hom(S,L^{n+1})$ is exact.
\noindent We have the following diagram between \;$L^{n}$ and $L^{n+1}$:
\[\begin{diagram}
\node{...D_{n+2}} \arrow{e,t} {d_{n+2}}
\arrow{s,r} {(1,0)} \node{D_{n+1}} \arrow{e,t} {d_{n+1}} \arrow{s,r}
{(1,0)} \node{Ker\phi_{n}} \arrow{e,t} {d_{n}} \arrow{s} \node{...}\\
\node{...D_{n+2} \oplus E_{1}} \arrow{e,t}
{\lambda_{n+2}} \node{D_{n+1} \oplus Kert_{0}} \arrow{e,t}
{\lambda_{n+1}} \node{Ker\phi_{n}} \arrow{e,t} {d_{n}} \node{...}\\
\end{diagram}\]
\noindent Therefore we have a split exact sequence $0 \longrightarrow L^{n}
\longrightarrow L^{n+1} \longrightarrow \frac{L^{n+1}}{L^{n}}
\longrightarrow 0$ where $\frac{L^{n+1}}{L^{n}} \cong ....
\longrightarrow E_{2} \longrightarrow E_{1} \longrightarrow Kert_{0}
\longrightarrow 0$ which is $Hom(\mathcal{X^{*}}-complex,-)$ exact
since $... \longrightarrow E_{2} \longrightarrow E_{1}
\longrightarrow E_{0} \longrightarrow X_{n+1} \longrightarrow 0$ is
a minimal $\mathcal{X}$-resolvent. Since $Hom(S,L^{n})$ and $Hom(S,\frac{L^{n+1}}{L^{n}})$ are exact,
so $Hom(S,L^{n+1})$ is exact.\\
\noindent Since $Hom(A_{n+1},L^{n})$ is exact where $A_{n+1} \in \mathcal{X}$,
$Hom(A_{n+1},D_{n+1}) \overset{d_{n+1}^{*}}
\longrightarrow Hom(A_{n+1},Ker\phi_{n}) \overset{d_{n}^{*}}
\longrightarrow Hom(A_{n+1},Ker\phi_{n-1})$ is
exact, that is $Kerd_{n}^{*}=Imd_{n+1}^{*}$. Thus \;$\phi_{n}({\theta_{n}}{a_{n+1}}-{\nu_{0}}{r})={\phi_{n}}{\theta_{n}}{a_{n+1}}-{\phi_{n}}{\nu_{0}}{r}={\beta_{n}}{a_{n+1}}-{\phi_{n}}{\lambda_{n+1}(0,r)}={\ell_{n+1}}{\beta_{n+1}}-{\ell_{n+1}(0,t_{0})(0,r)}={\ell_{n+1}}{\beta_{n+1}}-{\ell_{n+1}}{\beta_{n+1}}=0$. So, ${\theta_{n}}{a_{n+1}}-{\nu_{0}}{r} \in Ker\phi_{n}$ and also \;$d_{n}^{*}({\theta_{n}}{a_{n+1}}-{\nu_{0}}{r})=d_{n}({\theta_{n}}{a_{n+1}}-{\nu_{0}}{r})={d_{n}}{\theta_{n}}{a_{n+1}}-{d_{n}}{\nu_{0}}{r}={\theta_{n-1}}{a_{n}}{a_{n+1}}-0=0$.\\
\noindent Therefore \; ${\theta_{n}}{a_{n+1}}-{\nu_{0}{r}} \in Imd_{n+1}^{*}$; that is; there exists $s \in$\\ $Hom(A_{n+1},D_{n+1})$ such that
$${\theta_{n}}{a_{n+1}}-{\nu_{0}{r}}={d_{n+1}}{s}...(**)$$
\noindent Let $$\theta_{n+1}: A_{n+1} \longrightarrow {D_{n+1} \oplus E_{0}}$$
$$a \longrightarrow (s(a),r(a))$$
\noindent Then ${(0,t_{0})}{\theta_{n+1}}(a)=(0,t_{0})(s(a),r(a))$ where $a
\in A_{n+1}$ $$={t_{0}}{r(a)}$$ $$=\beta_{n+1}(a)$$
\noindent Therefore, ${(0,t_{0})}{\theta_{n+1}}=\beta_{n+1}$. Moreover, ${\lambda_{n+1}}{\theta_{n+1}}={\lambda_{n+1}}(s,r)={d_{n+1}}{s}+{\nu_{0}}{r}={\theta_{n}}{a_{n+1}}$(by (**) and diagram (1)).\\
\noindent Secondly, let's find $\theta_{n+2}$. We know that
${(0,t_{0})}{\lambda_{n+2}}=0$. Since
$(0,t_{0})({\theta_{n+1}}{a_{n+2}})=0$ and $t_{1}: E_{1}
\longrightarrow Kert_{0}$ is an $\mathcal{X}$-precover, there exists
$r_{1}:A_{n+2} \longrightarrow E_{1}$ such that
${t_{1}}{r_{1}}={r}{a_{n+2}}$. Since $Hom(A_{n+2},L^{n})$ is exact,
$$Hom(A_{n+2},D_{n+2}) \overset{d_{n+2}^{*}} \longrightarrow Hom(A_{n+2},D_{n+1}) \overset{d_{n+1}^{*}}
\longrightarrow Hom(A_{n+2},Ker\phi_{n}) $$ is
exact.
\noindent Since ${s}{a_{n+2}}+{\nu_{1}}{r_{1}} \in Hom(A_{n+2},D_{n+1})$ and \;$d_{n+1}^{*}({s}{a_{n+2}}+{\nu_{1}}{r_{1}})=d_{n+1}({s}{a_{n+2}}+{d_{n+1}}{\nu_{1}}{r_{1}})=({\theta_{n}}{a_{n+1}}-{\lambda_{n+1}}(0,r))a_{n+2}+{d_{n+1}}{\nu_{1}}{r_{1}}$\; by (**)
$=-\lambda_{n+1}(0,ra_{n+2})+{d_{n+1}}{\nu_{1}}{r_{1}}-{\nu_{0}}{r}{a_{n+2}}+{d_{n+1}}{\nu_{1}}{r_{1}}-{\nu_{0}}{t_{1}}{r_{1}}+{d_{n+1}}{\nu_{1}}{r_{1}}=0$.
\noindent So ${s}{a_{n+2}}+{\nu_{1}}{r_{1}} \in Im(d_{n+2})^{*}$. Thus there exists \;$s_{1}: A_{n+2} \longrightarrow D_{n+2}$ such that ${s}{a_{n+2}}+{\nu_{1}}{r_{1}}={d_{n+2}}{s_{1}}$.
\noindent Let $\theta_{n+2}: A_{n+2} \longrightarrow {D_{n+2} \oplus E_{1}}$;
$a \longrightarrow (s_{1}(a),-r_{1}(a))$.
\noindent Then \;${\lambda_{n+2}}{\theta_{n+2}}={\lambda_{n+2}}(s_{1},r_{1})=({d_{n+2}}{s_{1}}-{\nu_{1}}{r_{1}},{t_{1}}{r_{1}})=(sa_{n+2},ra_{n+2})={\theta_{n+1}}{a_{n+2}}$.\\

\noindent Let's find $\theta_{n+3}$. Since $Hom(A_{n+3},L^{n+1})$ is exact,
$Hom(A_{n+3},D_{n+3} \oplus E_{2}) \overset{\lambda_{n+3}^{*}} \rightarrow Hom(A_{n+3},D_{n+2} \oplus E_{1})
\overset{\lambda_{n+2}^{*}}
\rightarrow Hom(A_{n+3},D_{n+1} \oplus Kert_{0})$ is exact. Since
${\theta_{n+2}}{a_{n+3}} \in Hom(A_{n+3},D_{n+2} \oplus E_{1})$ and
$\lambda_{n+2}^{*}({\theta_{n+2}}{a_{n+3}})=\lambda_{n+2}({\theta_{n+2}}{a_{n+3}})=\theta_{n+1}({\theta_{n+2}}{a_{n+3}})=0$, ${\theta_{n+2}}{a_{n+3}} \in
Ker\lambda_{n+2}^{*}=Im\lambda_{n+3}^{*}$, therefore there exists
$\theta_{n+3} \in Hom(A_{n+3},D_{n+3} \oplus E_{2})$.\\

\noindent Let's find $\theta_{n+k}$ for $k > 3$. Since $Hom(A_{n+k},L^{n+1})$
is exact,\\
$ Hom(A_{n+k},D_{n+k} \oplus E_{k-1}) \overset{\lambda_{n+k}^{*}}
\longrightarrow Hom(A_{n+k},D_{n+k-1} \oplus E_{k-2})
\overset{\lambda_{n+k-1}^{*}} \longrightarrow
Hom(A_{n+k},$\\$D_{n+k-2} \oplus E_{k-3})$ is exact.\\
\noindent Since ${\theta_{n+k-1}}{a_{n+k}} \in Hom(A_{n+k},D_{n+k-1} \oplus
E_{k-2})$ and
$\lambda_{n+k-1}^{*}({\theta_{n+k-1}}{a_{n+k}})$\\$=\lambda_{n+k-1}{\theta_{n+k-1}}{a_{n+k}}
=\theta_{n+k-2}{a_{n+k-1}}{a_{n+k}}=0$,
${\theta_{n+k-1}}{a_{n+k}} \in
Ker\lambda_{n+k-1}^{*}$\\
$=Im\lambda_{n+k}^{*}$, therefore there exists $\theta_{n+k} \in
Hom(A_{n+k},D_{n+k} \oplus E_{k-1})$ such that
${\lambda_{n+k}}{\theta_{n+k}}={\theta_{n+k-1}}{a_{n+k}}$.
\end{proof}
\begin{lem} \label{1.29} Let $\mathcal{X}$ be closed under finite sum. If every module has an $\mathcal{X}$-preenvelope, then
every bounded complex has an $C(\mathcal{X^{*}})$-preenvelope.
\end{lem}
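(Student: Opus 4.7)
The plan is to dualize the proof of Lemma \ref{1.28} throughout: replace $\mathcal{X}$-precover resolvents by minimal $\mathcal{X}$-preenvelope coresolvents, replace kernels by cokernels, and replace the mapping cone construction by the mapping cocone construction. Without loss of generality, consider a bounded cochain complex
\[X(n): 0 \longrightarrow X^0 \longrightarrow X^1 \longrightarrow \cdots \longrightarrow X^n \longrightarrow 0,\]
and induct on $n$.

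For the base case $n=0$, iterate the preenvelope hypothesis on successive cokernels to produce a minimal $\mathcal{X}$-preenvelope coresolvent $0 \to X^0 \to E^0 \to E^1 \to \cdots$. Then $E^{\bullet}: 0 \to E^0 \to E^1 \to \cdots$ lies in $C(\mathcal{X^{*}})$, and the natural chain map $X(0) \to E^{\bullet}$ is a $C(\mathcal{X^{*}})$-preenvelope: given any chain map $X(0) \to A$ with $A \in C(\mathcal{X^{*}})$, factor $X^0 \to A^0$ through $E^0$ by the preenvelope property, and then successively lift through $E^1, E^2, \ldots$ using the universal property of cokernels together with the hypothesis that each $A^i \in \mathcal{X}$.

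For the inductive step, suppose $\phi: X(n) \to D(n)$ is a $C(\mathcal{X^{*}})$-preenvelope; take a minimal $\mathcal{X}$-preenvelope coresolvent of $X^{n+1}$ packaged as a complex $\underline{E^{\bullet}}$ supported in the appropriate degrees, and use the preenvelope property of $D(n)$ applied to the morphism induced by $\ell^{n}$ to obtain a chain map $\nu: \underline{E^{\bullet}} \to D(n)$. Form the mapping cocone $D(n+1)$ of $\nu$, whose terms in the top part pair $D^{n+k}$ with $E^{k-1}$, with differentials given by formulas dual to those in Lemma \ref{1.28}.

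The main obstacle is to verify that $X(n+1) \to D(n+1)$ has the preenvelope property. Dually to Lemma \ref{1.28}, define $L^k = Coker(X(k) \to D(k))$ for $k = 0, \ldots, n+1$, and prove by induction on $k$ that $Hom(L^k, S)$ is exact for every $S \in C(\mathcal{X^{*}})$: the base case follows from the minimality of the preenvelope coresolvent of $X^0$; the inductive step exploits a split short exact sequence $0 \to L^k \to L^{k+1} \to L^{k+1}/L^k \to 0$ whose quotient is essentially the shifted preenvelope coresolvent of the appropriate cokernel, and is therefore $Hom(-, S)$-exact by minimality. With this exactness established, the degree-by-degree construction of lifts $\theta^{n+k}$ for a given chain map $X(n+1) \to A$ through $D(n+1)$ proceeds by the same diagram chase as in Lemma \ref{1.28}, with preenvelopes replacing precovers throughout.
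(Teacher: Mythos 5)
Your proposal follows essentially the same route as the paper, which proves this lemma precisely by dualizing Lemma \ref{1.28}: base case via a minimal $\mathcal{X}$-preenvelope coresolvent of $X^{0}$, inductive step via gluing the coresolvent of $X^{n+1}$ onto $D(n)$ by a mapping cone, and the key exactness of $Hom(L^{k},S)$ for $L^{k}=coker(X(k)\to D(k))$ established through the split sequence $0\to L^{n}\to L^{n+1}\to L^{n+1}/L^{n}\to 0$, followed by the same degree-by-degree construction of the lifts $\theta^{n+k}$. One transcription slip: the comparison map $\nu$ must go from $D(n)$ to $\underline{E^{\bullet}}$ (obtained by factoring $X(n)\to \underline{X^{n+1}}\to \underline{E^{\bullet}}$ through the preenvelope $X(n)\to D(n)$, exactly as your phrase ``use the preenvelope property of $D(n)$'' indicates), not the reverse direction you wrote; with $\nu$ so oriented, the cone with terms pairing $D^{n+k}$ with the $(k-1)$st term of the coresolvent is precisely the paper's construction.
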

\begin{proof} Let $0 \longrightarrow X^{0} \overset{\phi^{0}} \longrightarrow E^{1}_{1} \overset{f^{1}} \longrightarrow E_{1}^{2}
 \longrightarrow ...$ be a minimal $\mathcal{X}$-preenvelope
resolvent of $X^{0}$. Let A be an $\mathcal{X^{*}}$-complex and $\beta\in Hom(X(0),A)$ where \;$X(0):... \longrightarrow 0 \longrightarrow
X^{0} \longrightarrow 0 \longrightarrow 0 \longrightarrow ...$. \\\\
Then we have the diagram:
\[\begin{diagram}
\node{...} \arrow{e} \node{0} \arrow{e} \arrow{s} \node{X^{0}}
\arrow{e} \arrow{s,r} {\phi^{0}} \node{0} \arrow{e} \arrow{s} \node{0} \arrow{e} \arrow{s} \node{...}\\
\node{...} \arrow{e} \node{0} \arrow{e}  \arrow{s} \node{E_{1}^{1}}
\arrow{e,t} {f^{1}} \arrow{s,r} {\lambda^{0}} \node{E_{1}^{2}}
\arrow{e,t} {f^{2}} \arrow{s,r} {\lambda^{1}} \node{E_{1}^{3}} \arrow{e,t} {f^{3}} \arrow{s,r} {\lambda^{2}} \node{...}\\
\node{...} \arrow{e} \node{A^{-1}} \arrow{e,t} {a^{-1}} \node{A_{0}}
\arrow{e,t} {a^{0}} \node{A^{1}} \arrow{e,t} {a^{1}}
\node{A^{2}} \arrow{e} \node{...}\\
\end{diagram}\]
\noindent where ${\lambda^{0}}{\phi^{0}}={\beta^{0}}$ and since
$\frac{E_{1}^{1}}{Im\phi^{0}} \longrightarrow A^{1}$ with
$x+Imf^{0}\mapsto {a^{0}}{\lambda^{0}(x)}$, there exists
$\lambda^{1}: E_{1}^{2} \longrightarrow A^{1}$ such that
${\lambda^{1}}{f^{1}}={a^{0}}{\lambda^{0}}$. Similarly we can define
$\lambda^{n} \in Hom(E_{1}^{n+1},A^{n})$ such that
${\lambda^{n}}{f^{n}}={a^{n-1}}{\lambda^{n-1}}$.

\noindent Let $X(n):... 0 \longrightarrow X^{0} \overset{\ell^{0}}
\longrightarrow X^{1} \overset{\ell^{1}} \longrightarrow X^{2}
\overset{\ell^{2}} \longrightarrow ... \longrightarrow X^{n}
\longrightarrow 0 \longrightarrow ...$. We will use an induction on
n to prove that $X(n)$ has an $C(\mathcal{X^{*}})$-preenvelope. At
the beginning we proved that $X(0)$ has an
$C(\mathcal{X^{*}})$-preenvelope. Suppose that
$D(n):...\longrightarrow 0 \longrightarrow D^{0} \overset{d^{0}}
\longrightarrow D^{1} \overset{d^{1}} \longrightarrow ....
\longrightarrow D^{n} \overset{d^{n}} \longrightarrow D^{n+1}
\longrightarrow ...$ be an $C(\mathcal{X^{*}})$-preenvelope of
$X(n)$ such that
\[\begin{diagram}
\node{X(n):...0} \arrow{e} \node{X^{0}} \arrow{e,t} {\ell^{0}}
\arrow{s,r} {\phi^{0}} \node{X^{1}...} \arrow{s,r} {\phi^{1}}
 \arrow{e} \node{X^{n}} \arrow{e,t} {\ell^{n}} \arrow{s,r}
{\phi^{n}} \node{0...} \\
\node{D(n):...0} \arrow{e} \node{D^{0}} \arrow{e,t} {d^{0}}
\node{D^{1}...} \arrow{e} \node{D^{n}} \arrow{e,t} {d^{n}} \node{D^{n+1}...}\\
\end{diagram}\]
Let $E^{\bullet}=... \longrightarrow 0 \longrightarrow E^{1}
\overset{t^{1}} \longrightarrow E^{2} \overset{t^{2}}
\longrightarrow E^{3} \overset{t^{3}} \longrightarrow ...$ be an
$C(\mathcal{X^{*}})$-preenvelope of $\underline{X^{n+1}}$(where
$X^{n+1}$ and $E^{1}$ is in nth place) where $t^{0}:X^{n+1}
\longrightarrow E^{1}$ is an $\mathcal{X}$-preenvelope of $X^{n+1}$.
Thus we have the morphism $\nu$ making the following commutative
diagram:
\[\begin{diagram}
\node{X(n)} \arrow{e} \arrow{s} \node{\underline{X^{n+1}}} \arrow{s}\\
\node{D(n)} \arrow{e,t,..} {\nu} \node{E^{\bullet}} \\
\end{diagram}\]
\noindent where ${t^{0}}{\ell^{n}}={\nu^{0}}{\phi^{n}}$...(*). Since $X(n)
\longrightarrow D(n)$ is an $C(\mathcal{X^{*}})$-preenvelope, we
have the diagram
\[\begin{diagram}
\node{D(n)=...} \arrow{e} \node{D^{n-1}} \arrow{e,t} {d^{n-1}}
\node{D^{n}} \arrow{e,t} {d^{n}} \arrow{s,r} {\nu^{0}}
\node{D^{n+1}} \arrow{e,t} {d^{n+1}} \arrow{s,r}
{\nu^{1}} \node{...}  \\
\node{E^{\bullet}=...} \arrow{e} \node{0} \arrow{e} \node{E^{1}}
\arrow{e,t} {t^{1}}
\node{E^{2}} \arrow{e,t} {t^{2}}  \node{...} \\
\end{diagram}\]...(1)
\noindent Let C be the mapping cone of $\nu:D(n) \longrightarrow E^{\bullet}$.
So $C=... \longrightarrow 0 \longrightarrow D^{0} \longrightarrow
D^{1} \longrightarrow ... \longrightarrow D^{n}
\overset{\lambda^{n}} \longrightarrow {D^{n+1} \oplus E^{1}}
\overset{\lambda^{n+1}} \longrightarrow {D^{n+2} \oplus E^{2}}
\overset{\lambda_{n+2}} \longrightarrow ...$ where
$$\lambda^{n}(x)=(d^{n}(x),\nu^{0}(x))$$ and for $k \geq 1$
$$\lambda^{n+k}(x,y)=(-d^{n+k}(x),\nu^{k}(x)+t^{k}(y)).$$

\noindent We show that $X(n+1) \rightarrow C$ is an
$C(\mathcal{X^{*}})$-preenvelope of $X(n+1)$.

\noindent Claim: Let \;$L^{k}=coker(X(k)\rightarrow D(k))$\; where $k \in
\{0,1,...,n\}$ and then
$$L^{0}:... \rightarrow coker(\phi^{0}) \rightarrow E_{1}^{1}
\overset{f^{1}} \rightarrow E_{1}^{2} \rightarrow ...,$$
$$L^{n-1}:... \longrightarrow{coker\phi^{n-1}} \overset {d_{1}^{n-1}}
\longrightarrow {D^{n}} \overset {d^{n}} \longrightarrow
{D^{n+1}}\longrightarrow...$$
$$L^{n}:...\longrightarrow{coker\phi^{n}} \overset {d_{1}^{n}} \longrightarrow
{D^{n+1}} \overset {d^{n+1}} \longrightarrow {D^{n+2}}
\longrightarrow...$$
$${L^{n+1}:...\rightarrow {coker\phi^{n}}
\rightarrow {D^{n+1}\oplus cokert^{0}} \overset {\lambda_{1}^{n+1}}
\rightarrow {D^{n+2} \oplus E^{2}} \overset{\lambda^{n+2}}
\rightarrow {D^{n+3} \oplus E^{3}}...}$$
\noindent where $\lambda_{1}^{n+1}(x,y)=\lambda^{n+1}(x,a)$ and $y=a+Imt^{0}$ for
$a\in E^{1}$. We claim that for every $S \in C(\mathcal{X^{*}})$,
$Hom(L^{k},S)$ is exact where $k \in \{0,1,...,n,n+1\}$. It is
understood easily that $Hom(L^{0},S)$ is exact. It is enough to
prove that if $Hom(L^{n},S)$ is exact, then \;$Hom(L^{n+1},S)$ is
exact. We have the following diagram between $L^{n}$ and $L^{n+1}$ :
\[\begin{diagram}
\node{...coker\phi^{n}} \arrow{e,t} {d_{1}^{n}} \arrow{s} \node{D^{n+1}} \arrow{e,t} {d^{n+1}} \arrow{s} \node{D^{n+2}} \arrow{e} \arrow{s}
\node{D^{n+3}...} \arrow{s}\\
\node{...coker\phi^{n}} \arrow{e}
\node{D^{n+1}\oplus cokert^{0}} \arrow{e,t}{\lambda_{1}^{n+1}}
\node{D^{n+2} \oplus E^{2}} \arrow{e,t}{\lambda^{n+2}} \node{D^{n+3}
\oplus E^{3}...}\\
\end{diagram}\]
\noindent And so, $\frac{L^{n+1}}{L^{n}}:... \rightarrow 0 \rightarrow 0 \rightarrow
... \rightarrow 0 \rightarrow cokert^{0} \rightarrow E^{2}
\rightarrow ...$ is a resolution of $cokert^{0}$, and hence
$Hom(\frac{L^{n+1}}{L^{n}},S)$ is exact where S is an
$\mathcal{X^{*}}$-complex. Since $0 \rightarrow L^{n} \rightarrow
L^{n+1} \rightarrow \frac{L^{n+1}}{L^{n}} \rightarrow 0$ is split
exact, we have an exact complex \;$0 \rightarrow Hom(\frac{L^{n+1}}{L^{n}},S) \rightarrow
Hom(L^{n+1},S) \rightarrow Hom(L^{n},S)\rightarrow 0$. Therefore
$Hom(\frac{L^{n+1}}{L^{n}},S)$ and $Hom(L^{n},S)$ are exact , so
$Hom(L^{n+1},S)$ is exact.\\
\noindent Let $A \in C(\mathcal{X^{*}})$. Then we have the diagram
as follow:
\[\begin{diagram}
\node{...X^{0}} \arrow{e,t} {\ell^{0}}
\arrow{s,r} {\phi^{0}} \node{...X^{n}} \arrow{e,t} {\ell^{n}}
\arrow{s,r} {\phi^{n}} \node{X^{n+1}} \arrow{e,t} {\ell^{n+1}}
\arrow{s,r} {(0,t^{0})} \node{0...}
 \arrow{s}\\
\node{...D^{0}} \arrow{e,t} {d^{0}} \arrow{s,r}
{\theta^{0}} \node{...D^{n}} \arrow{e,t} {d^{n}} \arrow{s,r}
{\phi^{n}} \arrow{s,r} {\theta^{n}} \node{D^{n+1} \oplus E^{1}}
\arrow{e,t} {\lambda^{n+1}} \arrow{s,r} {\theta^{n+1}} \node{D^{n+2}
\oplus E^{2}...}
\arrow{s,r} {\theta^{n+2}}\\
\node{...A^{-1}} \arrow{e,t} {a^{-1}} \node{A^{0}} \arrow{e,t}
{a^{0}} \node{...A^{n}} \arrow{e,t} {a^{n}} \node{A^{n+1}...}  \\
\end{diagram}\]
\noindent where the $\beta^{i}:X^{i} \rightarrow A^{i}$ are maps such that
${\theta^{0}}{\phi^{0}}={\beta^{0}}$, ...,
${\theta^{n}}{\phi^{n}}={\beta^{n}}$.

\noindent First, we will find $\theta^{n+1}$ such that
${\theta^{n+1}}{\lambda^{n}}={a^{n}}{\theta^{n}}$ and
${\theta^{n+1}}{(0,t^{0})}={\beta^{n+1}}$. Moreover
${\lambda^{n}}{\phi^{n}}={(0,t^{0})}{\ell^{n}}$ by (*). Since
$\beta^{n+1}:X^{n+1} \longrightarrow A^{n+1}$ and $A^{n+1} \in
\mathcal{X}$ and $t^{0}:X^{n+1} \longrightarrow E^{1}$ is an
$\mathcal{X}$-preenvelope, there exists $r: E^{1} \longrightarrow
A^{n+1}$ such that $\beta^{n+1}={r}{t^{0}}$.

\noindent Since $Hom(L^{n-1},A^{n+1})$ is exact where $A^{n+1} \in
\mathcal{X}$, $... \rightarrow Hom(D^{n+1},A^{n+1})$\\
$ \overset{(d^{n+1})^{*}}
\rightarrow Hom(D^{n},A^{n+1}) \overset{({d_{1}}^{n-1})^{*}}
\rightarrow Hom(coker\phi^{n-1},A^{n+1}) \rightarrow ...$\; is exact,
that is, $Ker({d_{1}}^{n-1})^{*}=Im(d^{n})^{*}$.

\noindent Claim: ${a^{n}}{\theta^{n}}-{r}{\nu^{0}}  \in
Ker({d_{1}}^{n-1})^{*}$.

\noindent Since $({d_{1}}^{n-1})^{*}({a^{n}}{\theta^{n}}-{r}{\nu_{0}})=({a^{n}}{\theta^{n}}-{r}{\nu^{0}}){d_{1}}^{n-1}={a^{n}}{\theta^{n}}{d_{1}}^{n-1}-{r}{\nu_{0}}{d_{1}}^{n-1}={a^{n}}{\theta^{n}}{d_{1}}^{n-1}$ by diagram (1). Since
\;${\theta^{n}}{d_{1}^{n-1}}(x+Im\phi^{n-1})={a^{n-1}}{\theta^{n-1}}(x)$\;
where $x \in D^{n-1}$, ${a^{n}}{\theta^{n}}{{d_{1}}^{n-1}}=0$. So ${a^{n}}{\theta^{n}}-{r}{\nu^{0}} \in
{Ker(d_{1}}^{n-1})^{*}=Im({d^{n}})^{*}$ and hence there exists $s
\in Hom(D^{n+1},A^{n+1})$ such that
${a^{n}}{\theta^{n}}-{r}{\nu^{0}}={s}{d^{n}}$ ...(**)

\noindent Let $\theta^{n+1}:D^{n+1} \oplus E^{1} \rightarrow A^{n+1}$ with
$(a,b) \mapsto s(a)+r(b)$. Then
$\theta^{n+1}(0,t^{0})(x)=\theta^{n+1}(0,t^{0}(x))=r{t^{0}(x)}=\beta^{n+1}(x)$
where $x \in X^{n+1}$. Therefore
$\theta^{n+1}(0,t^{0})=\beta^{n+1}$.

\noindent Moreover, \;${\theta^{n+1}}{\lambda^{n}}(x)={\theta^{n+1}}(\lambda^{n}(x))=\theta^{n+1}(d^{n}(x),\nu^{0}(x))={a^{n}}{\theta^{n}}(x)-r{\nu^{0}}(x)+r{\nu^{0}}(x) $\; by (**)
$$={a^{n}}{\theta^{n}}(x).$$
\noindent Therefore ${\theta^{n+1}}{\lambda^{n}}={a^{n}}{\theta^{n}}$.

\noindent Secondly, let's find $\theta^{n+2}$. We know that
$\lambda^{n+1}(0,t^{0})=0$. Since $\alpha: cokert^{0}
\longrightarrow A^{n+2}$ with $x+Imt^{0}\mapsto -{a^{n+1}}{r(x)}$ is
a map and $t_{1}^{1}:cokert^{0} \longrightarrow E^{2}$ is an
$\mathcal{X}$-preenvelope where ${t_{1}^{1}}{\pi}={t^{1}}$, there
exists $r^{1}:E^{2} \longrightarrow A^{n+2}$ such that
${r^{1}}{t^{1}}=\alpha$. Since $Hom(L^{n},A^{n+2})$ is exact,
$$ Hom(D^{n+2},A^{n+2}) \overset{{d^{n+1}}^{*}} \rightarrow Hom(D^{n+1},A^{n+2}) \overset{{d^{n}}^{*}}
\rightarrow Hom(D^{n},A^{n+2})$$ is exact, and so
\;$Im((d^{n+1})^{*})=Ker(d^{n})^{*}$.

\noindent Claim: $-{a^{n+1}}{s}+{r^{1}}{\nu^{1}} \in Ker(d^{n})^{*}$.

$(d^{n})^{*}(-{a^{n+1}}{s}+{r^{1}}{\nu^{1}})=(-{a^{n+1}}{s}+{r^{1}}{\nu^{1}})d^{n}=-{a^{n+1}}{s}d^{n}+{r^{1}}{\nu^{1}}d^{n}=-{a^{n+1}}({a^{n}}{\theta^{n}}-{r}{\nu^{0}})+{r^{1}}{\nu^{1}}d^{n}$ by(**)\\
$={a^{n+1}}{r}{\nu^{0}}+{r^{1}}{\nu^{1}}d^{n}={a^{n+1}}{r}{\nu^{0}}+{r^{1}}{t^{1}}\nu^{0}$by diagram (1)\\
$={a^{n+1}}{r}{\nu^{0}}+{r^{1}}{t^{1}_{1}}{\pi}\nu^{0}={a^{n+1}}{r}{\nu^{0}}+\alpha{\pi}\nu^{0}={a^{n+1}}{r}{\nu^{0}}-{a^{n+1}}{r}\nu^{0}=0$.
\noindent Therefore $-{a^{n+1}}{s}+{r^{1}}{\nu^{1}} \in {Im(d^{n+1})}^{*}$. So
there exists $s^{1} \in Hom(D^{n+2},A^{n+2})$ such that
$-{a^{n+1}}{s}+{r^{1}}{\nu^{1}}={s^{1}}{d^{n+1}}.$

\noindent Let $\theta^{n+2}: {D^{n+2} \oplus E^{2}} \longrightarrow A^{n+2}$
with $(x,y) \mapsto s^{1}(x)+r^{1}(y)$

\noindent Then \; ${\theta^{n+2}}{\lambda^{n+1}}(x,y)={\theta^{n+2}}(-d^{n+1}(x),\nu^{1}(x)+t^{1}(y))=s^{1}(-d^{n+1}(x))+r^{1}(\nu^{1}(x)+t^{1}(y))=-s^{1}d^{n+1}(x)+r^{1}\nu^{1}(x)+r^{1}t^{1}(y)=a^{n+1}s(x)-r^{1}\nu^{1}(x)+r^{1}\nu^{1}(x)+r^{1}t^{1}(y)=a^{n+1}s(x)+r^{1}t^{1}(y)={a^{n+1}}{\theta^{n+1}}(x,y).$

\noindent Let's find $\theta^{n+3}$. Since $Hom(L^{n+1},A^{n+3})$ is exact,
$Hom(D^{n+3} \oplus E^{3},A^{n+3})$\\$ \overset{{(\lambda^{n+2})}^{*}}
\rightarrow Hom(D^{n+2} \oplus E^{2},A^{n+3})
\overset{{(\lambda_{1}^{n+1})}^{*}} \rightarrow Hom(D^{n+1} \oplus
coker(t^{0}),A^{n+3})$ is exact. Since ${a^{n+2}}{\theta^{n+2}} \in
Hom(D^{n+2} \oplus E^{2},A^{n+3})$ and
$$({\lambda_{1}^{n+1}})^{*}({a^{n+2}}{\theta^{n+2}})(x,y)={a^{n+2}}{\theta^{n+2}}\lambda^{n+1}(x,a)
={a^{n+2}}{a^{n+1}}\theta^{n+1}(x,a)=0$$ where $y=a+Im(t^{0})$ and
$a\in E^{1}$, we have that $${a^{n+2}}{\theta^{n+2}} \in
Im({\lambda^{n+2}})^{*}$$, therefore there exists $\theta^{n+3} \in
Hom(D^{n+3} \oplus E^{3},A^{n+3})$ such that
$${a^{n+2}}{\theta^{n+2}}={\theta^{n+3}}{\lambda^{n+2}}.$$

\noindent Let's find $\theta^{n+k}$ for $k > 3$. Since
$Hom(L^{n+1},A^{n+k})$ is exact, $ Hom(D^{n+k} \oplus
E^{k},A^{n+k}) \overset{({\lambda^{n+k-1}})^{*}} \rightarrow
Hom(D^{n+k-1} \oplus E^{k-1},A^{n+k})
\overset{({\lambda^{n+k-2}})^{*}} \rightarrow Hom(D^{n+k-2} \oplus
E^{k-2},A^{n+k})$ is exact.

\noindent Since ${a^{n+k-1}}{\theta^{n+k-1}} \in Hom(D^{n+k-1}
\oplus E^{k-1},A^{n+k})$ and
$$({\lambda^{n+k-2}})^{*}({a^{n+k-1}}{\theta^{n+k-1}})={a^{n+k-1}}{\theta^{n+k-1}}\lambda^{n+k-2}
={a^{n+k-1}}{a^{n+k-2}}\theta^{n+k-2}$$$=0$,
\;${a^{n+k-1}}{\theta^{n+k-1}} \in Im({\lambda^{n+k-1}})^{*}$,and so
there exists $\theta^{n+k} \in Hom(D^{n+k} \oplus E^{k},A^{n+k})$
such that
${a^{n+k-1}}{\theta^{n+k-1}}={\theta^{n+k}}{\lambda^{n+k-1}}$.
\end{proof}

\noindent Again using the tecnique of Alina Iacob we can give the following result.
\begin{prop} Let $\mathcal{X}$ be a class of modules with closed under direct sum. If every
module has an $\mathcal{X}$-precover, then every bounded above
complex has a $C(\mathcal{X^{*}})$-precover.
\end{prop}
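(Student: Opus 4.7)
The plan is to realize $X$ as a direct limit of bounded truncations and apply Lemma~\ref{1.28} to each, following the direct-limit technique attributed to Iacob. After a harmless shift I may assume that $X$ has support in degrees $[0,\infty)$, matching the indexing convention of Lemma~\ref{1.28}. For each $n\ge 0$ let $X(n)$ denote the brutal truncation of $X$ to degrees $[0,n]$; these bounded complexes form a direct system with inclusions $X(n)\hookrightarrow X(n+1)$ and $X=\varinjlim X(n)$. Lemma~\ref{1.28} then supplies a $C(\mathcal{X^{*}})$-precover $\phi^{(n)}: D(n)\to X(n)$ for each $n$, and the mapping-cone recipe inside that proof (producing $D(n+1)$ from $D(n)$ together with an $\mathcal{X}$-precover resolvent of $X_{n+1}$) yields canonical chain maps $D(n)\to D(n+1)$ sitting over the truncation inclusions.

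I then set $D=\varinjlim D(n)$ as the candidate precover of $X$. The crucial observation is that the mapping cone at step $n$ only alters terms in degrees strictly above $n$, so in each fixed degree $i$ the system $\{D(n)_{i}\}_{n}$ stabilizes once $n\ge i$. The stable value $D_{i}$ is a finite direct sum of terms drawn from the $\mathcal{X}$-precover resolvents of $X_{0},X_{1},\ldots,X_{i}$, and hence lies in $\mathcal{X}$ by closure under direct sum. Consequently $D\in C(\mathcal{X^{*}})$, and the compatible maps $\phi^{(n)}$ glue to a chain map $\phi: D\to X$.

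It remains to verify that $\phi$ is a $C(\mathcal{X^{*}})$-precover. Given any $f: A\to X$ with $A\in C(\mathcal{X^{*}})$, I would build a lift $g: A\to D$ degree by degree, in the style of the induction that produced the maps $\theta_{n+1},\theta_{n+2},\ldots$ in Lemma~\ref{1.28}: at degree $i$ choose a preliminary lift via the precover map $t_{0}^{(i)}: E^{(i)}_{0}\to X_{i}$, then correct it by an element provided by the $Hom$-exactness of the auxiliary kernel complexes $L^{n}$ against $\mathcal{X^{*}}$-complexes that was already established inside Lemma~\ref{1.28}. Since the degree-wise terms of the $L^{n}$ also stabilize once $n$ is large, those exactness statements transfer to the limit complex $L=\varinjlim L^{n}$, and the bootstrap continues indefinitely. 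The principal obstacle, as I see it, is precisely this coherence: ensuring that the lifts chosen degree by degree assemble into a genuine chain map $g$ with $\phi g=f$. I expect this to be handled exactly as in Lemma~\ref{1.28}, the stabilization observation doing the work of carrying the finite-step argument into the direct limit without requiring any hypothesis on $\mathcal{X}$ beyond closure under direct sum.
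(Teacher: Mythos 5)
Your proposal follows the paper's own argument essentially step for step: truncate $X$ to bounded complexes $X(n)$ with $X=\varinjlim X(n)$, apply Lemma~\ref{1.28} to get precovers $D(n)\to X(n)$, pass to $D=\varinjlim D(n)$ using the degree-wise stabilization of both $D(n)$ and the kernels $L^{n}$ (the paper records this as $L_{k}=L^{k}_{k}$), and construct the lift of a given $A\to X$ degree by degree via the $\operatorname{Hom}(S,L)$-exactness inherited from the lemma. This is the same decomposition, the same key lemma, and the same correction-by-kernel-exactness bootstrap, so the proposal is correct and matches the paper's route.
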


\begin{proof} Let $X(n): ... \rightarrow 0 \rightarrow X_{n} \rightarrow X_{n-1} \rightarrow ... \rightarrow
X_{1} \rightarrow X_{0} \rightarrow 0$ and
$X=\underrightarrow{lim}X(n)$. (Then $X=... \rightarrow X_{2}
\rightarrow X_{1} \rightarrow X_{0} \rightarrow 0$).

\noindent By Lemma 2.3, we know that $X(n)$ has a
$C(\mathcal{X^{*}})$-precover such that $0 \rightarrow L^{n}
\rightarrow D(n) \rightarrow X(n)$ where $L^{n}$ is the kernel of
($D(n) \rightarrow X(n)$) and $Hom(S,L^{n})$ is exact for any
$\mathcal{X^{*}}$-complex S.

\noindent Let $L=\underrightarrow{lim} L^{n}$ and so
$L_{k}=L_{k}^{k}$ for any $k \geq 0$.

\noindent Using the proof of Lemma 2.3 we can say that $Hom(S,L_{k+1})
\rightarrow Hom(S,L_{k}) \rightarrow Hom(S,L_{k-1})$ is exact, that
is, $Hom(S,L)$ is exact. We show that $\underrightarrow{lim} D(n)
\rightarrow X$ is a $C(\mathcal{X^{*}})$-precover.

\noindent Let $D=\underrightarrow{lim} D(n)$ and $\beta \in
Hom(A,X)$ where A is an $\mathcal{X^{*}}$-complex. Again using the
proof of Lemma 2.3 we can find $\phi \in Hom(D,X)$. Then
$L^{0}=Ker\phi_{0}$, $L^{1}=Ker\phi_{1}$, $L^{2}=Ker\phi_{2}$, ...

\noindent We have the diagram;

\[\begin{diagram}
\node{...} \arrow{e} \node{A_{2}} \arrow{e,t} {a_{2}} \arrow{s}
\node{A_{1}} \arrow{e,t} {a_{1}} \arrow{s} \node{A_{0}} \arrow{e,t}
{a_{0}} \arrow{s} \node{A_{-1}} \arrow{e} \node{...} \\
\node{...} \arrow{e} \node{D_{2}} \arrow{e,t} {d_{2}} \arrow{s,r}
{\phi_{2}} \node{D_{1}}
\arrow{e,t} {d_{1}} \arrow{s,r} {\phi_{1}} \node{D_{0}} \arrow{e} \arrow{s,r} {\phi_{0}} \node{0} \\
\node{...} \arrow{e} \node{X_{2}}
\arrow{e,t} {\ell_{2}} \node{X_{1}} \arrow{e,t} {\ell_{1}} \node{X_{0}} \arrow{e} \node{0} \\
\end{diagram}\]

\noindent where ${\phi_{0}}{\alpha_{0}}={\beta_{0}}$,
${\phi_{1}}{r_{1}}={\beta_{1}}$, ...,
${\phi_{i}}{r_{i}}={\beta_{i}}$, ....

\noindent Then
$\phi_{0}({\alpha_{0}}{a_{1}}-{d_{1}}{r_{1}})={\phi_{0}}{\alpha_{0}}{a_{1}}-{\phi_{0}}{d_{1}}{r_{1}})$
$={\beta_{0}}{a_{1}}-{\ell_{1}}{\phi_{1}}{r_{1}}$
$={\beta_{0}}{a_{1}}-{\ell_{1}}{\beta_{1}}=0.$

\noindent Thus ${\alpha_{0}}{a_{1}}-{d_{1}}{r_{1}} \in Hom(A_{1},
Ker\phi_{0}=L_{0})$. Since $Hom(A_{1},L)$ is exact, there is $s_{1}
\in Hom(A_{1},L_{1})$ such that
${\alpha_{0}}{a_{1}}-{d_{1}}{r_{1}}={d_{1}}{s_{1}}$. Let
$\alpha_{1}=s_{1}+r_{1}$. Then
${\phi_{1}}{\alpha_{1}}=\phi_{1}(s_{1}+r_{1})={\phi_{1}}{s_{1}}+{\phi_{1}}{r_{1}}=0$

\noindent If we continue in this way, then we can find $\alpha_{i}
\in Hom(A_{i},D_{i})$ such that
${d_{i}}{\alpha_{i}}={\alpha_{i-1}}{a_{i}}$. Thus $D \rightarrow X$
is a $C(\mathcal{X^{*}})$-precover.
\end{proof}
\noindent Using the modification to complexes of Corollary 5.2.7 in \cite{EJ} we can give the following result.

\begin{cor} Let $\mathcal{X}$ be a class with closed under  direct sum and $C(\mathcal{X^{*}})$ be closed under direct limit. If
every module has an $\mathcal{X}$-precover, then every bounded complex
has a $C(\mathcal{X^{*}})$-cover.
\end{cor}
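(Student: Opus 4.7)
The plan is to deduce this corollary by combining Lemma~\ref{1.28} with the complex-analogue of Enochs' theorem on passage from precovers to covers, namely Corollary 5.2.7 of \cite{EJ}. By Lemma~\ref{1.28}, every bounded complex $X$ already admits a $C(\mathcal{X^{*}})$-precover $\phi:D\longrightarrow X$, since $\mathcal{X}$ is closed under direct sums and every module has an $\mathcal{X}$-precover. The two closure hypotheses of the corollary translate directly into what Enochs' argument requires: $C(\mathcal{X^{*}})$ is closed under direct sums (this passes degreewise from $\mathcal{X}$) and under direct limits (assumed).

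I would then mimic the proof of Corollary 5.2.7 of \cite{EJ} verbatim inside the category of complexes of $R$-modules. Starting from the precover $\phi:D\longrightarrow X$, one considers, up to the evident equivalence over $X$, the family of all $C(\mathcal{X^{*}})$-precovers of $X$ that factor through $\phi$. A cardinality argument in the Grothendieck category of complexes bounds this family set-theoretically, and a Zorn/direct-limit construction inside $C(\mathcal{X^{*}})$ then produces a minimal element. Closure of $C(\mathcal{X^{*}})$ under direct limits ensures that the minimal object still lies in the class, and the minimality promotes the precover property to the cover property, producing the desired $C(\mathcal{X^{*}})$-cover of $X$.

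The main obstacle is checking that the module-level proof of Corollary 5.2.7 of \cite{EJ} transfers intact to the category of complexes. That proof relies only on exactness of filtered colimits, the existence of a generator, and the Grothendieck axioms, all of which hold for complexes of $R$-modules; the hypotheses on $\mathcal{X}$ in the statement are tailored precisely so that the degreewise verification of the required closure properties for $C(\mathcal{X^{*}})$ becomes automatic. Once this transfer of the Enochs--Xu argument to the category $C(\mathcal{X^{*}})$ is justified, the existence of a $C(\mathcal{X^{*}})$-cover for every bounded complex follows immediately from Lemma~\ref{1.28}.
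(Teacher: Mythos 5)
Your proposal matches the paper's intended argument exactly: the paper offers no written proof for this corollary, only the remark that it follows from Lemma~2.3 (which supplies the $C(\mathcal{X^{*}})$-precover of a bounded complex) together with ``the modification to complexes of Corollary 5.2.7 in \cite{EJ}'' (the standard Enochs passage from precovers to covers under closure under direct limits). Your sketch of how that transfer works is at least as detailed as what the paper provides, so the approach is the same and correct.
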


\begin{thm} Let $\mathcal{X}$ be closed under direct sum and extension
closed . Let $C(\mathcal{X^{*}})$
be closed under inverse and direct limit. If every module has an
$\mathcal{X}$-cover, then every complex has a
$C(\mathcal{X^{*}})$-cover.
\end{thm}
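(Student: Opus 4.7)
The strategy is to exhaust an arbitrary complex $X$ by its two-sided bounded truncations, apply Corollary 2.5 to each truncation, and assemble the resulting covers by an inverse limit. For each $n\ge 0$ let $X(n)$ denote the brutal truncation of $X$ obtained by replacing $X_{i}$ with $0$ for $|i|>n$ and keeping the original differentials on the remaining terms. The obvious chain maps $X(n+1)\to X(n)$ (identity in degrees $|i|\le n$, zero otherwise) turn $\{X(n)\}$ into an inverse system, and degreewise one has $\varprojlim X(n)=X$.

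First I would apply Corollary 2.5 to obtain a $C(\mathcal{X}^{*})$-cover $\pi_{n}\colon D(n)\to X(n)$ for each $n$. Using the precover property of $\pi_{n}$ on the composite $D(n+1)\to X(n+1)\to X(n)$ yields chain maps $\varphi_{n}\colon D(n+1)\to D(n)$ lifting the truncation. Setting $D=\varprojlim D(n)$, the closure of $C(\mathcal{X}^{*})$ under inverse limits gives $D\in C(\mathcal{X}^{*})$, and the $\pi_{n}$ assemble into a chain map $\pi\colon D\to X$.

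Next I would verify that $\pi\colon D\to X$ is a $C(\mathcal{X}^{*})$-precover. Given $A\in C(\mathcal{X}^{*})$ and $\beta\colon A\to X$, composing with $X\to X(n)$ gives $\beta_{n}\colon A\to X(n)$, which lifts through $\pi_{n}$ to some $\alpha_{n}\colon A\to D(n)$. These lifts need not be compatible with the $\varphi_{n}$, so writing $L^{n}=\ker\pi_{n}$ I would follow the kernel-exactness strategy of Lemma 2.3 and Proposition 2.4 to show that $\mathrm{Hom}(S,L^{n})$ is exact for every $S\in C(\mathcal{X}^{*})$. Then $\varphi_{n}\alpha_{n+1}-\alpha_{n}$ lies in $L^{n}$, and one can inductively adjust each $\alpha_{n}$ by an element of $L^{n}$ (without disturbing $\pi_{n}\alpha_{n}=\beta_{n}$) until the corrected family forms a compatible cone, hence factors through a chain map $A\to D$ lifting $\beta$.

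Finally I would upgrade this precover to a cover by a Wakamatsu-type argument, invoking the extension-closed hypothesis on $\mathcal{X}$ (which is inherited by $C(\mathcal{X}^{*})$) to split off any summand of $D$ that maps to $0$ under $\pi$. The main obstacle is the compatibility of the $\alpha_{n}$ across the inverse system: unlike the direct-limit case of Proposition 2.4, one must here contend with a potential $\varprojlim^{1}$ obstruction, and controlling it is where the $\mathrm{Hom}$-exactness of the kernel tower $\{L^{n}\}$ together with the extension-closed hypothesis plays its essential role.
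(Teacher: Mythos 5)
There is a genuine gap at the very first step: your two-sided brutal truncations do not form an inverse system of complexes. With a differential $d_{i}\colon X_{i}\to X_{i-1}$, killing the terms in degrees $<-n$ produces a quotient complex (that part is fine, and the projections $X\to Y^{n}$, $Y^{n+1}\to Y^{n}$ are chain maps), but killing the terms in degrees $>n$ produces a \emph{subcomplex}, whose natural map points \emph{into} $X$, not out of it. Concretely, your proposed map $X(n+1)\to X(n)$ (identity in degrees $|i|\le n$, zero otherwise) fails the chain-map condition in the square between degrees $n+1$ and $n$: one composite is $\mathrm{id}_{X_{n}}\circ d_{n+1}=d_{n+1}$ and the other is $d^{X(n)}_{n+1}\circ 0=0$, since $X(n)_{n+1}=0$ while $X(n+1)_{n+1}=X_{n+1}$. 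So the tower you want to take $\varprojlim$ of does not exist unless the differentials vanish. This is exactly why the paper separates the two directions: bounded complexes are handled by induction via mapping cones (Lemma 2.3), the unbounded-above direction by a \emph{direct} limit (Proposition 2.5), and only the unbounded-below direction by an inverse limit of the genuine quotient truncations $Y^{n}$.

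A second, smaller gap: you correctly identify the $\varprojlim^{1}$ obstruction but do not resolve it. In the paper this is done concretely: the transition maps $D^{n+1}\to D^{n}$ are not arbitrary lifts but are built by the mapping-cone construction of Proposition 2.5, so that $\ker(T^{n+1}\to T^{n})$ is identified as the kernel of a cover of the one-term complex $\underline{Y_{-n-1}}$; Wakamatsu's Lemma (this is precisely where extension closure of $\mathcal{X}$ enters) then gives surjectivity of $Hom(S,T^{n+1})\to Hom(S,T^{n})$, which is the Mittag--Leffler condition needed for $0\to Hom(S,\varprojlim T^{n})\to Hom(S,\varprojlim D^{n})\to Hom(S,X)\to 0$ to be exact. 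With abstractly chosen lifts $\varphi_{n}$ obtained only from the precover property, you have no handle on $\ker(T^{n+1}\to T^{n})$, and the inductive correction of the $\alpha_{n}$ need not stabilize. Finally, the passage from precover to cover is not by splitting off a summand killed by $\pi$; it is the standard direct-limit argument (the modification of Corollary 5.2.7 of Enochs--Jenda), which is why closure of $C(\mathcal{X^{*}})$ under direct limits is in the hypotheses.
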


\begin{proof} First we will show that every complex has
$C(\mathcal{X^{*}})$-precover if $\mathcal{X}$ is a class of
modules with extension closed and closed under direct sum and
$C(\mathcal{X^{*}})$ is closed under inverse limit. And then modifying
Corollary 5.2.7 in \cite{EJ} we can find a
$C(\mathcal{X^{*}})$-cover if $C(\mathcal{X^{*}})$ is closed under
direct limit.

\noindent Let $X: ... \rightarrow X_{2} \rightarrow X_{1} \rightarrow X_{0}
\rightarrow X_{-1} \rightarrow...$ and let $Y^{n}: ... \rightarrow
X_{1} \rightarrow X_{0} \rightarrow X_{-1} \rightarrow
...\rightarrow X_{-n} \rightarrow 0 \rightarrow ...$. So
$\underleftarrow{\lim} Y^{n}=X$.

\noindent We see that $Y^{n}$ has a
$C(\mathcal{X^{*}})$-precover $D^{n}:... \rightarrow D_{1} \rightarrow
D_{0} \rightarrow D_{-1} \rightarrow ... \rightarrow D_{-n}
\rightarrow 0 \rightarrow...$ such that $$D_{-n}=E_{-n}^{0},$$
$$D_{-n+1}=E_{-n}^{1} \oplus E_{-n+1}^{0},$$ $$D_{-n+2}=E_{-n}^{2} \oplus E_{-n+1}^{1} \oplus
E_{-n+2}^{0},$$ ...

\noindent where $... \rightarrow E_{k}^{2} \rightarrow E_{k}^{1}
\rightarrow E_{k}^{0} \overset{\phi_{k}} \rightarrow X_{k}
\rightarrow 0$ is a minimal $\mathcal{X}$-precover resolvent of
$X_{k}$ for $k \geq -n$.

\noindent Then $Y^{n+1}=... \rightarrow X_{0} \rightarrow X_{1}
\rightarrow ... \rightarrow X_{-n} \rightarrow X_{-n-1} \rightarrow
0 \rightarrow ...$ and by the proof of Proposition 2.5  the $C(\mathcal{X^{*}})$-precover of $Y^{n+1}$,
$D^{n+1}=...\rightarrow E_{-n-1}^{1} \oplus D_{-n} \rightarrow
E_{-n-1}^{0} \rightarrow 0 \rightarrow...$.

\noindent Let $T^{n}=Ker(\phi: D^{n} \rightarrow Y^{n})$ and then again by the proof of Proposition 2.5, 
$T^{n+1}=... E_{-n-1}^{2} \oplus E_{-n}^{1} \oplus Ker\phi_{-n+1}
\rightarrow E_{-n-1}^{1} \oplus Ker\phi_{-n} \rightarrow
Ker\phi_{-n-1} \rightarrow 0$,

\noindent $T^{n}=... \rightarrow E_{-n}^{1} \oplus Ker\phi_{-n+1}
\rightarrow Ker\phi_{-n} \rightarrow 0$.

\noindent Then $Ker(T^{n+1} \rightarrow T^{n})=... \rightarrow
E_{-n-1}^{2} \rightarrow E_{-n-1}^{1} \rightarrow Ker\phi_{-n-1}
\rightarrow 0$ is the kernel of a $C(\mathcal{X^{*}})$-cover of
$\underline{Y_{-n-1}}$. By Wakamatsu's Lemma $Hom(S,T^{n+1}) \rightarrow
Hom(S,T^{n}) \rightarrow 0$ is epic for any
$\mathcal{X^{*}}$-complex S. Since we have the exact sequence $0
\rightarrow T^{n} \rightarrow D^{n} \rightarrow Y^{n}$ and $D^{n}$
is a $C(\mathcal{X^{*}})$-cover of $Y^{n}$, we have the exact sequence $0 \rightarrow
Hom(S,T^{n}) \rightarrow Hom(S,D^{n}) \rightarrow Hom(S,Y^{n})
\rightarrow 0$ for any
$\mathcal{X^{*}}$-complex S.

\noindent By the modification of Theorem 1.5.13 and 1.5.14 in \cite{EJ} we have the following exact sequence; $0 \rightarrow
Hom(S,\underleftarrow{\lim} T^{n}) \rightarrow
Hom(S,\underleftarrow{\lim} D^{n}) \rightarrow
Hom(S,\underleftarrow{\lim} Y^{n}) \rightarrow 0$. So,
$\underleftarrow{\lim} D^{n}$ is a $C(\mathcal{X^{*}})$-precover of X.
\end{proof}
\noindent Dually, we can say the following theorems without proof;

\begin{prop} Let $\mathcal{X}$ be closed under direct sum. If every module
has an $\mathcal{X}$-preenvelope, then every right bounded complex
has a $C(\mathcal{X^{*}})$-preenvelope.
\end{prop}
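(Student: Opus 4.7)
The plan is to dualise the proof of Proposition~2.5, replacing Lemma~\ref{1.28} with Lemma~\ref{1.29} and working with direct limits of truncations that grow on the right rather than on the left. I would first write the given right bounded complex as $X : 0 \to X^0 \to X^1 \to X^2 \to \cdots$ and form the bounded truncations $X(n) : 0 \to X^0 \to \cdots \to X^n \to 0$, so that $X = \underrightarrow{lim}\, X(n)$ along the inclusions $X(n) \hookrightarrow X(n+1)$, with the direct system stabilising in each cohomological degree.

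By Lemma~\ref{1.29}, each $X(n)$ carries a $C(\mathcal{X}^*)$-preenvelope $\phi^{(n)} : X(n) \to D(n)$. Reading the inductive construction in the proof of Lemma~\ref{1.29}, the passage from $D(n)$ to $D(n+1)$ is obtained from the mapping cone of the preenvelope of $\underline{X^{n+1}}$, which keeps the degrees up to $n$ of $D(n)$ untouched and only modifies higher degrees. This furnishes canonical transition maps $D(n) \to D(n+1)$ compatible with $X(n) \hookrightarrow X(n+1)$. Setting $D = \underrightarrow{lim}\, D(n)$, the standing hypothesis that $C(\mathcal{X}^*)$ is closed under direct limits places $D$ in $C(\mathcal{X}^*)$, and the $\phi^{(n)}$ assemble to a morphism $\phi : X \to D$. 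Moreover, writing $L^n = \mathrm{coker}(\phi^{(n)})$, the Wakamatsu-type analysis from Lemma~\ref{1.29} shows $\mathrm{Hom}(L^n, S)$ is exact for every $S \in C(\mathcal{X}^*)$; the same degree-wise stabilisation carries over to $L = \underrightarrow{lim}\, L^n$, so $\mathrm{Hom}(L, S)$ remains exact.

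To check the preenvelope property of $\phi$, I would fix $A \in C(\mathcal{X}^*)$ and $\beta : X \to A$ and construct $\theta : D \to A$ with $\theta \phi = \beta$ by upward induction on the cohomological degree, mirroring the construction of the $\alpha_i$ in Proposition~2.5. In degree $0$ the $\mathcal{X}$-preenvelope property of $X^0 \to D^0$ produces $\theta^0$ with $\theta^0 \phi^0 = \beta^0$. Having built $\theta^0, \ldots, \theta^n$ forming a partial cochain map extending $\beta$, I would take a preliminary lift $r : D^{n+1} \to A^{n+1}$ of $\beta^{n+1}$ through the $\mathcal{X}$-preenvelope summand of $D^{n+1}$, and then exploit the exactness of $\mathrm{Hom}(L, A^{n+1})$ to correct the cocycle defect $a^n \theta^n - r \, d^n$, producing $\theta^{n+1}$ with $\theta^{n+1} d^n = a^n \theta^n$ and $\theta^{n+1} \phi^{n+1} = \beta^{n+1}$; this mirrors the correction step labelled $(**)$ in the proof of Lemma~\ref{1.29}.

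The main obstacle is showing that the Hom-exactness against $\mathcal{X}^*$-complexes really does survive passage to the direct limit $L = \underrightarrow{lim}\, L^n$, and that the inductive correction lands in the correct degree at each stage. Both points reduce to the degree-wise stabilisation of $D(n) \to D(n+1)$ and $L^n \to L^{n+1}$ built into Lemma~\ref{1.29}, together with the assumed closure of $C(\mathcal{X}^*)$ under direct limits, and should follow exactly as in the proof of Proposition~2.5.
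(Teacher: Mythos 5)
Your overall plan is the right one: the paper gives no written proof of this proposition (it is stated as one of the results that follow ``dually''), and the intended argument is exactly what you describe --- dualize Proposition 2.5, substituting Lemma~\ref{1.29} for Lemma~\ref{1.28}, take preenvelopes $D(n)$ of the bounded truncations $X(n)$, pass to the limit using the fact that the mapping-cone construction of Lemma~\ref{1.29} leaves degrees $\le n$ of $D(n)$ unchanged, and verify the preenvelope property degreewise by lifting $\beta^{n+1}$ through the preenvelope summand and correcting the defect via the Hom-exactness of the cokernel complexes $L^n$. That verification step is the correct dual of the construction of the $\alpha_i$ in Proposition 2.5.

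There is, however, a concrete error in your setup. For a cochain complex $X: 0\to X^0\to X^1\to\cdots$ the truncation $X(n): 0\to X^0\to\cdots\to X^n\to 0$ is a \emph{quotient} complex of $X(n+1)$, not a subcomplex: the degreewise identity in degrees $\le n$ fails to be a chain map $X(n)\to X(n+1)$ because the square involving $d^n: X^n\to X^{n+1}$ does not commute unless $d^n=0$. So the ``inclusions $X(n)\hookrightarrow X(n+1)$'' you invoke do not exist, and neither do the transition maps $D(n)\to D(n+1)$ (the degreewise split inclusion $D(n)^{n+1}\hookrightarrow D(n)^{n+1}\oplus E^1$ fails to commute with $\lambda^n(x)=(d^n(x),\nu^0(x))$ because of the component $\nu^0$). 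The natural maps go the other way: $X(n+1)\twoheadrightarrow X(n)$ and $D(n+1)\to D(n)$ by projecting away the new mapping-cone summands $E^k$. Hence $X=\underleftarrow{\lim}\,X(n)$ and $D=\underleftarrow{\lim}\,D(n)$; the direct limit of Proposition 2.5 dualizes to an \emph{inverse} limit here. Relatedly, there is no ``standing hypothesis'' that $C(\mathcal{X^{*}})$ is closed under direct limits in this proposition, and none is needed: since $D(n+1)^k=D(n)^k$ for $k\le n$, the limit is computed degreewise and each $D^k$ is a finite direct sum of objects of $\mathcal{X}$ (this is where closure under direct sums is used). Once the direction of the limits is corrected, the remainder of your argument --- the assembly of $\phi:X\to D$, the exactness of $\mathrm{Hom}(L,S)$, and the inductive construction of $\theta$ --- goes through as you outline.
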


\noindent Using the modification to complexes of Corollary 6.3.5 in \cite{EJ} we can give the following result.

\begin{cor} Let $\mathcal{X}$ be a class with closed under direct sum and summands and $C(\mathcal{X^{*}})$ be closed under inverse limit. If
every module has an $\mathcal{X}$-preenvelope, then every bounded complex
has a $C(\mathcal{X^{*}})$-envelope.
\end{cor}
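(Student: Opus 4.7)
My plan is to mimic the argument used in Corollary 2.7 above, but dualized: there the authors invoked the modification to complexes of Corollary 5.2.7 in \cite{EJ} (cover from precover via direct limits and closure under summands); here I would invoke the modification to complexes of Corollary 6.3.5 in \cite{EJ} (envelope from preenvelope via inverse limits and closure under summands).

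The first step is to produce a $C(\mathcal{X^{*}})$-preenvelope. Since $\mathcal{X}$ is closed under direct sum (hence under finite sum) and every module has an $\mathcal{X}$-preenvelope, Lemma~\ref{1.29} guarantees that every bounded complex $X$ has a $C(\mathcal{X^{*}})$-preenvelope $\phi:X \longrightarrow E$. With this in hand, the second step is to extract an envelope. Following the Enochs--Jenda strategy, I would consider the collection $\mathcal{F}$ of all $C(\mathcal{X^{*}})$-preenvelopes $\phi_\alpha:X\longrightarrow E_\alpha$ that factor through $\phi$ via some chain map $\pi_\alpha: E\longrightarrow E_\alpha$ with $\phi_\alpha=\pi_\alpha\phi$, ordered by existence of a compatible chain map between targets. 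This is a (directed) inverse system, and by the hypothesis that $C(\mathcal{X^{*}})$ is closed under inverse limits one obtains $\bar E=\varprojlim E_\alpha\in C(\mathcal{X^{*}})$ together with a canonical $\bar\phi:X\longrightarrow \bar E$.

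The third step is to check that $\bar\phi$ is a $C(\mathcal{X^{*}})$-preenvelope and that it is actually an envelope. The preenvelope property follows from the universal property of $\varprojlim$ combined with the complex-level analogue of Theorem 1.5.13/1.5.14 in \cite{EJ} that was already used in the proof of Theorem 2.6. For the envelope property, suppose $\sigma:\bar E\longrightarrow \bar E$ is a chain map with $\sigma\bar\phi=\bar\phi$; then by the minimality built into the inverse-limit construction $\sigma$ must be the identity up to an idempotent splitting. Here closure of $\mathcal{X}$ under direct summands enters: an idempotent endomorphism of $\bar E$ decomposes it level-wise as a direct sum of $\mathcal{X}$-modules, and the ``redundant'' summand gives a strictly smaller preenvelope in $\mathcal{F}$, contradicting the minimality of $\bar E$ unless $\sigma$ is an automorphism.

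The main obstacle is the level-wise compatibility of the summand decomposition with the differentials of the complex: when $\mathcal{X}$ is closed under summands one can split modulewise, but one must verify that the splittings in each degree assemble into an honest decomposition of the complex, so that the ``smaller'' preenvelope produced really lies in $C(\mathcal{X^{*}})$. The other delicate point is set-theoretic: $\mathcal{F}$ must be shown to form a set (not a proper class) modulo isomorphism so that the inverse limit is well defined; this is handled exactly as in \cite{EJ} by bounding cardinalities through $\phi$. Once these two technicalities are in place, the argument of Corollary 6.3.5 in \cite{EJ} transfers verbatim to $C(\mathcal{X^{*}})$.
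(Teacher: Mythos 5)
Your proposal follows exactly the route the paper intends: the paper offers no written proof for this corollary beyond the remark that it follows by ``the modification to complexes of Corollary 6.3.5 in \cite{EJ}'', applied to the $C(\mathcal{X^{*}})$-preenvelopes supplied by Lemma~\ref{1.29} (closure under direct sums giving closure under finite sums). Your sketch of the inverse-system construction and the role of closure under summands is a reasonable unpacking of that citation, so the approach is essentially the same as the paper's.
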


\begin{thm} Let $\mathcal{X}$ be closed under direct sum and summands and extension
closed and $C(\mathcal{X^{*}})$ be closed under inverse and direct limit. If
every module has an $\mathcal{X}$-envelope, then every complex has a
$C(\mathcal{X^{*}})$-envelope.

\end{thm}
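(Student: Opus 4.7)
The approach dualizes the proof of Theorem 2.7 step by step. First I would show that every complex has a $C(\mathcal{X}^{*})$-preenvelope by approximating $X$ via right-bounded truncations and passing to a direct limit; then I would upgrade this preenvelope to an envelope using the inverse limit closure of $C(\mathcal{X}^{*})$, exactly dual to the way direct limit closure promotes a precover to a cover.

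For the first step, given a complex $X$, form the right-bounded truncations $Y^{n}$ obtained by zeroing out all terms in degrees strictly greater than $n$, so that $X=\varinjlim Y^{n}$. By Proposition 2.8 each $Y^{n}$ has a $C(\mathcal{X}^{*})$-preenvelope $Y^{n}\to D^{n}$, and the inductive construction from the proof of Lemma 2.4 arranges the $D^{n}$ into a direct system of degreewise split monomorphisms $D^{n}\to D^{n+1}$; writing $T^{n}=\mathrm{coker}(Y^{n}\to D^{n})$, the successive quotients $T^{n+1}/T^{n}$ are $\mathcal{X}$-preenvelope resolutions of single modules $X^{n+1}$, so that $\mathrm{Hom}(T^{n+1}/T^{n},S)$ is exact for every $S\in C(\mathcal{X}^{*})$ and the sequence $0\to T^{n}\to T^{n+1}\to T^{n+1}/T^{n}\to 0$ splits in each degree. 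By the dual of Wakamatsu's Lemma the short exact sequences $0\to\mathrm{Hom}(T^{n},S)\to\mathrm{Hom}(D^{n},S)\to\mathrm{Hom}(Y^{n},S)\to 0$ hold, and the transition maps $\mathrm{Hom}(T^{n+1},S)\to\mathrm{Hom}(T^{n},S)$ are surjective, so the Mittag--Leffler condition kills $\varprojlim{}^{1}$. Using the modification to complexes of Theorems 1.5.13 and 1.5.14 of \cite{EJ} exactly as in Theorem 2.7, the inverse limit over $n$ produces the exact sequence
\[
0\to\mathrm{Hom}(\varinjlim T^{n},S)\to\mathrm{Hom}(D,S)\to\mathrm{Hom}(X,S)\to 0,
\]
where $D=\varinjlim D^{n}$ lies in $C(\mathcal{X}^{*})$ by direct limit closure; hence $X\to D$ is a $C(\mathcal{X}^{*})$-preenvelope of $X$.

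For the second step, apply the dual of the modification to complexes of Corollary 5.2.7 of \cite{EJ} used for Corollary 2.6, namely the modification of Corollary 6.3.5 of \cite{EJ} already invoked in Corollary 2.9: with $C(\mathcal{X}^{*})$ closed under inverse limits and $\mathcal{X}$ closed under direct sums, summands, and extensions, every $C(\mathcal{X}^{*})$-preenvelope of $X$ can be refined to a $C(\mathcal{X}^{*})$-envelope. The main obstacle is the first step: verifying that the Wakamatsu-type short exact sequences survive passage to the direct limit, which requires both the Mittag--Leffler property of the tower $\{\mathrm{Hom}(T^{n},S)\}$ and the compatible choice of the $D^{n}$ as a coherent direct system. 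Both issues are settled by reading the inductive construction of Lemma 2.4 as a telescoping process over $n$, exactly dual to the way the $D^{n+1}\to D^{n}$ inverse system and the kernels $\mathrm{Ker}(T^{n+1}\to T^{n})$ were manipulated in the proof of Theorem 2.7.
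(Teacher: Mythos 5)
Your proposal is correct and follows exactly the route the paper intends: the paper states this theorem without proof, remarking only that it is dual to Theorem 2.7, and your argument is precisely that dualization (truncations with $X=\varinjlim Y^{n}$, preenvelopes of the truncations from Proposition 2.8 assembled into a direct system, cokernels $T^{n}$ whose transition maps on $\mathrm{Hom}(-,S)$ are surjective by the dual Wakamatsu lemma, the Mittag--Leffler argument via the modification of Theorems 1.5.13--1.5.14 of \cite{EJ}, and finally the modification of Corollary 6.3.5 of \cite{EJ} to pass from preenvelope to envelope). The only point to watch is the direction of truncation: with the cochain conventions of Lemma 2.4 and Proposition 2.8 the subcomplexes forming the direct system are those obtained by zeroing out the terms in low degrees (zeroing out degrees above $n$ yields quotient complexes and an inverse system instead), but this is purely a matter of indexing convention and does not affect the argument.
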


\end{document}